\documentclass[11pt,final]{amsart}
\usepackage{amsmath,amssymb,amsthm,amsfonts,mathrsfs,amsopn}
\usepackage[all]{xy}
\usepackage{dsfont}
\usepackage{hyperref}
\usepackage{color}
\usepackage{esint}
\usepackage{mathtools}
\mathtoolsset{showonlyrefs}
\usepackage{slashed}

\usepackage{tikz-cd}

\usepackage{faktor}

\newcommand{\tnorm}[1]{%
  \left\vert\kern-0.3ex\left\vert\kern-0.3ex\left\vert #1 \right\vert\kern-0.3ex\right\vert\kern-0.3ex\right\vert
}

\usepackage[margin=0.9in]{geometry}

\newcommand{\Abracket}[1]{\left<#1\right>} % angle bracket
\newcommand{\parenthesis}[1]{\left(#1\right)} % round bracket
\newcommand{\braces}[1]{\left\{#1\right\}} % curly bracket

\newcommand{\R}{\mathbb{R}}
\newcommand{\C}{\mathbb{C}}
\newcommand{\T}{\mathbb{T}}

\newcommand{\dd}{\mathop{}\!\mathrm{d}}
\newcommand{\eps}{\varepsilon}
\newcommand{\p}{\partial}

\newcommand{\D}{\slashed{D}}
\newcommand{\pd}{\slashed{\partial}}
\newcommand{\pD}{\mathbf{D}}

\DeclareMathOperator{\Dom}{Dom}
\DeclareMathOperator{\dv}{\dd{vol}}

\DeclareMathOperator{\Eigen}{Eigen}

\DeclareMathOperator{\id}{Id}

\DeclareMathOperator{\Mtp}{Multi} % multiplicity

\DeclareMathOperator{\Spect}{Spect}

\DeclareMathOperator{\up}{up}
\DeclareMathOperator{\low}{down}

\newtheorem{thm}{Theorem}[section]

\newtheorem{lemma}[thm]{Lemma}
\newtheorem{prop}[thm]{Proposition}

\title[Periodic Solutions]{Periodic Solutions for a Nonlinear Dirac Equation with Soler-Type Nonlinearity}

\subjclass[2020]{35J50, 35Q70, 81Q15}

\thanks{The author is supported by Beijing Natural Science Foundation No. 1262018.}

\keywords{nonlinear Dirac equation, periodic solutions, Soler-type nonlinearity}

\author[F. Zhang]{Fuping Zhang}
\address{Fuping Zhang, School of Mathematics and Statistics, Beijing Institute of Technology, Zhongguancun South Street No. 5, 100081 Beijing, P.R. China.}
\email{fuping.zhang@bit.edu.cn}

\begin{document}

\begin{abstract}
We study nonlinear Dirac equations on the three-dimensional torus with subcritical Soler-type nonlinearities. The variational problem is strongly indefinite, while the non-coercive Soler potential prevents direct compactness. We recover the Palais--Smale condition by introducing a small coercive perturbation. Refined spectral estimates and a linking argument yield perturbed solutions with uniform energy bounds. Passing to the limit as the perturbation vanishes gives a nontrivial continuously differentiable periodic solution. This existence result holds for every spectral parameter in an explicit interval below the mass.

\end{abstract}

\maketitle

\section{Introduction}

Nonlinear Dirac equations arise naturally in relativistic quantum mechanics and field theory, describing spin-$\frac{1}{2}$ particles with self-interactions. Among the most studied models is the Soler equation~\cite{BoussaidComech2019Nonlinear,Soler1970Classical}, which involves a nonlinear term of the form $(\bar{\psi}\psi)\psi$~\cite{BalabaneCazenave1988Existence,Balabane1988Existence,Balabane1990Existence, Merle1988Existence}. Such equations exhibit rich mathematical structures, including solitary waves, stationary states, and periodic solutions~\cite{DingLiu2014periodic,EstebanSere1995stationary,Esteban2002overview}. On compact manifolds, nonlinear Dirac equations benefit from the discrete Dirac spectrum, and on flat tori this framework naturally leads to periodic solutions, which in turn suggests the possibility of applying variational methods~\cite{BartschDing2006Solutions,Ding2007variational,DingLiXu2016bifurcation,DingRuf2008Solutions}.

In this paper, we consider the nonlinear Dirac equation on the flat torus $\T^3$:
\begin{align}\label{eq:NDE-spatial}
-\sum_{j=1}^3 i\gamma^0\gamma^j\partial_j\psi + m\gamma^0\psi
= a\psi + 2h(x)\nu |\bar{\psi}\psi|^{\nu-2}\cdot \bar{\psi}\psi\,\gamma^0\psi,
\end{align}
where $m>0$ is the mass, $a\in(0,m)$ is a spectral parameter, $h\in C^\infty(\T^3,\mathbb{R})$ is positive and bounded away from zero, i.e.
\begin{align}
0<h_{\min}:=\min_{x}h(x)\leq h(x)\leq h_{\max}:=\max_{x}h(x),
\end{align}
and $\nu\in(1,\frac{3}{2})$. The~$\gamma^\mu$'s are chosen as follows. The matrix~$\gamma^0$ takes the form
\begin{align}
    \gamma^0= \begin{pmatrix} I_2 & 0 \\ 0 & -I_2\end{pmatrix}
    =\begin{pmatrix}
        1 & & &  \\ & 1 & & \\ & & -1 & \\ & &  & -1
    \end{pmatrix}
\end{align}
and induces a decomposition of the spinors into~$\pm1$ eigenspaces of~$\gamma^0$.
As for the others, we denote by
\begin{align}
    {\sigma^1} = \left( {\begin{array}{*{20}{c}}
			{0}&1 \\
			1&{0}
	\end{array}} \right),\;{\sigma^2} = \left( {\begin{array}{*{20}{c}}
			{0}&{ - i} \\
			i&{0}
	\end{array}} \right),\;{\sigma^3} = \left( {\begin{array}{*{20}{c}}
			1&{0} \\
			{0}&{ - 1}
	\end{array}} \right)
\end{align}
for the Pauli matrices, and the gamma matrices are
\begin{align}
    {\gamma^k} = \left( {\begin{array}{*{20}{c}}
			{0}&{{\sigma^k}} \\
			{{-\sigma^k}}&{0}
	\end{array}} \right),\quad\text{for}\quad k= 1, 2, 3.
\end{align}
Note that they satisfy the following Clifford relations: for~$j,k\in\braces{1,2,3}$,
\begin{align}
    \sigma^j\sigma^k+\sigma^k\sigma^j= 2\delta^{jk}I_2, & &
    \gamma^j\gamma^k + \gamma^k\gamma^j= -2\delta^{jk}I_4.
\end{align}
Moreover,
\begin{align}
    \gamma^0\gamma^k + \gamma^k \gamma^0 =0, & &
    (\gamma^0)^2=I_4.
\end{align}
Then~$\sum_{\mu} i\gamma^\mu \p_\mu$ is the four-dimensional space-time Dirac operator. The nonlinearity is of Soler type and depends on the Lorentz-invariant quantity $\bar\psi\psi=\langle\gamma^0\psi,\psi\rangle$. This quantity may exhibit cancellation and does not control $|\psi|^2$. This makes the variational analysis substantially more delicate than for nonlinearities of the form $|\psi|^{p-2}\psi$~\cite{DingRuf2012Existence,DingWei2008Stationary,ZhangQinZhao2012Multiple,ZhangZhangZhao2018Existence}.

The main difficulty in applying variational methods to~\eqref{eq:NDE-spatial} arises from two sources. First, the linear operator $\pD := -i\sum_j \gamma^0\gamma^j\partial_j + m\gamma^0$ is self-adjoint but indefinite, leading to a strongly indefinite functional. Second, the nonlinearity $|\bar{\psi}\psi|^\nu$ does not control the full $L^{2\nu}$-norm of $\psi$. For the corresponding energy functional, the Palais--Smale condition is not readily available. To overcome this, we introduce a perturbation of the form $-\eps |\psi|^{2\nu}$, which restores the Palais--Smale condition for each $\eps>0$. However, the unperturbed problem $(\eps=0)$ corresponds to the original equation, and one must carefully pass to the limit $\eps\to0^+$.
This strategy is persued in the recent work~\cite{WZ2026splitting,WZ2026Stationary}.
Here we refine the uniform estimates for a special nonlinearity of pure Soler type, namely~$h(x)|\bar{\psi}\psi|^\nu$ for some positive smooth function~$h$ and subcritical exponent~$\nu\in(1,\frac{3}{2})$.

We can now state our main result.
Throughout, the data~$\nu,h,\mathbb{T}^3$ are fixed, and the dependence on them in various constants is suppressed when convenient.
But we should emphasize that most of the constants depend on the torus geometry.  
Let~$\nu^*=\nu/(\nu-1)$ be the H\"older conjugate of $\nu$. 
The constant $\kappa_*(m)$ is defined in Section~\ref{sect:last}.
\begin{thm}\label{thm:main}
There exists a constant
\begin{align}
a_*:=\max\left\{0,\; m-\frac{\kappa_*(m)}{\operatorname{vol}(\mathbb{T}^3)^{1/\nu^*}}
  \left(\frac{h_{\min}}{h_{\max}}\right)^{1/\nu}\right\} \in [0,m)
\end{align}
such that for any $a\in(a_*,m)$, there exists a nontrivial $C^1$ periodic solution to \eqref{eq:NDE-spatial}.
\end{thm}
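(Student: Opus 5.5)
The plan is to treat \eqref{eq:NDE-spatial} variationally on $E:=H^{1/2}(\T^3,\C^4)$ and to construct solutions as limits of critical points of perturbed functionals. Split $E=E^+\oplus E^-$ according to the spectral decomposition of $\pD$, whose spectrum on $\T^3$ is discrete and contained in $(-\infty,-m]\cup[m,\infty)$. Then consider
\begin{align}
\mathcal{L}_\eps(\psi)=\frac12\langle(\pD-a)\psi,\psi\rangle_{L^2}-\int_{\T^3}h\,|\bar\psi\psi|^{\nu}\dd x-\eps\int_{\T^3}|\psi|^{2\nu}\dd x,\qquad \eps>0,
\end{align}
whose critical points at $\eps=0$ are weak solutions of \eqref{eq:NDE-spatial}. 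Since $a<m$, the quadratic part is positive definite on $E^+$ and negative definite on $E^-$. Because $2\nu<3$ is subcritical, $H^{1/2}$ embeds compactly into $L^{2\nu}$, and all nonlinear terms are $C^1$ with compact derivatives.

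\textbf{Step 1: Palais--Smale for $\eps>0$.} For a (PS) sequence, compute $\mathcal{L}_\eps(\psi_n)-\frac12\mathcal{L}_\eps'(\psi_n)\psi_n=(\nu-1)\int\bigl(h|\bar\psi_n\psi_n|^\nu+\eps|\psi_n|^{2\nu}\bigr)$. This bounds $\eps\|\psi_n\|_{2\nu}^{2\nu}$, so the coercive perturbation controls the full $L^{2\nu}$ norm. Testing $\mathcal{L}_\eps'$ against $\psi_n^\pm$ and using H\"older then gives an $H^{1/2}$ bound, and compactness of the embedding yields a convergent subsequence.

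\textbf{Step 2: Linking geometry uniform in $\eps$.} Take $e\in E^+$ to be an eigenspinor of $\pD$ at the bottom eigenvalue $m$. On the constant-torus the constant spinors $e=(c,0)$ with $\gamma^0e=e$ qualify, and for these $\bar e e=|e|^2$ is constant. On $\partial B_\rho\cap E^+$ the functional satisfies $\mathcal{L}_\eps\ge\alpha>0$ uniformly in $\eps\in[0,1]$, since the quadratic form dominates $\|\psi\|^{2\nu}$ for small $\rho$. On $E^-\oplus\R^+e$ one has $\mathcal{L}_\eps\le\mathcal{L}_0$, so it suffices to bound $\mathcal{L}_0$ on the half-space $Q=\{v+te\}$. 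The linking theorem then gives critical points $\psi_\eps$ with $\alpha\le\mathcal{L}_\eps(\psi_\eps)\le \sup_Q\mathcal{L}_0=:c^*$, independent of $\eps$.

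The key estimate, and the place where $a_*$ enters, is to show that $\sup_Q\mathcal{L}_0$ is finite, and in fact that $\mathcal{L}_0\to-\infty$ on $Q$. Here the Soler term can cancel on $v+te$: one has $\bar\psi\psi=t^2|e|^2+2t\,\mathrm{Re}\langle\gamma^0 e,v\rangle+\bar vv$, and $\bar vv$ can be negative. I would use the spectral estimates from Section~\ref{sect:last}, which define $\kappa_*(m)$, to bound
\begin{align}
\int h|\bar\psi\psi|^\nu\ \ge\ h_{\min}\Bigl|\int \bar\psi\psi\Bigr|^\nu\operatorname{vol}(\T^3)^{-\nu/\nu^*}
\end{align}
via H\"older. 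The quantity $\int\bar\psi\psi$ is then compared with $\langle(\pD-a)\psi,\psi\rangle=(m-a)t^2|e|^2_2-\|v\|^2_{a}$, where $\|\cdot\|_a$ denotes the (positive) norm induced by $-(\pD-a)$ on $E^-$. The condition $a>a_*$ should be exactly what makes the negative part dominate when $\int\bar\psi\psi$ is small while $\|v\|$ is comparable to $t$; the ratio $h_{\min}/h_{\max}$ appears when the upper and lower bounds on $h$ are traded against each other. I expect this step to be the main obstacle, and I would first try to reduce it to an inequality between quadratic forms on $E^-\oplus\R e$.

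\textbf{Step 3: Passing $\eps\to0^+$.} With the energy bounds $\alpha\le\mathcal{L}_\eps(\psi_\eps)\le c^*$ and the identity from Step 1, the quantity $\int h|\bar\psi_\eps\psi_\eps|^\nu$ is uniformly bounded, but $\eps\|\psi_\eps\|_{2\nu}^{2\nu}$ alone does not give a uniform $L^{2\nu}$ bound. To obtain an $\eps$-uniform $H^{1/2}$ bound I would argue by contradiction. Normalize $w_\eps=\psi_\eps/\|\psi_\eps\|$ and use the equation, with $\pD-a$ invertible, to get $w_\eps\to w\neq0$. Then $\int|\bar ww|^\nu=0$, which forces $\bar ww\equiv0$, and inserting this into the limiting linear relation should contradict the sign structure used in Step 2; the refined estimate at $a>a_*$ is reused here. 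Once bounded, $\psi_\eps\to\psi$ strongly in $E$ by compactness, $\mathcal{L}_0(\psi)\ge\alpha>0$, so $\psi\neq0$ and $\psi$ solves \eqref{eq:NDE-spatial}. Finally, a bootstrap with elliptic regularity for $\pD$ gives $\psi\in C^{1}$: the nonlinearity is $C^{0,2\nu-2}$-like in $\psi$ since $\nu>1$, which upgrades $\psi$ to $W^{1,p}$ for all $p$, then to $C^{0,\beta}$, and finally by Schauder estimates to $C^{1,\beta}$.
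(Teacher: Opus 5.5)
\textbf{Where $a_*$ actually enters.} Your overall architecture matches the paper: the coercive perturbation, (PS) for $\eps>0$, linking over $E^-\oplus\R^+e$ with the constant spinor $e=(e_0,0)$, uniform energy bounds, the limit $\eps\to0^+$, and the bootstrap. However, you have put the condition $a>a_*$ in the wrong step, and the step that actually uses it is missing.

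The linking bound needs no restriction on $a$. For $\psi=v+te$ with $v\in E^-$, the cross term $2t\int\mathrm{Re}\langle\gamma^0e,v\rangle$ vanishes, because $\gamma^0e=e$ and $v\perp e$ in $L^2$. Hence $\int\bar\psi\psi=t^2+\int\bar vv\ge t^2-\|v\|_{L^2}^2$. The quadratic part is at most $\frac{m-a}{2}t^2-\frac{m+a}{2}\|v\|_{L^2}^2$. So $\mathcal L_0\le0$ whenever $\|v\|_{L^2}^2\ge\frac{m-a}{m+a}t^2$, for every $a\in(0,m)$. Otherwise, H\"older gives $\int|\bar\psi\psi|^\nu\ge\operatorname{vol}(\T^3)^{1-\nu}(t^2-\|v\|_{L^2}^2)^\nu$, and maximizing in $t$ yields $c_\eps\le U(m,a)=C(\nu,h_{\min},\T^3)(m-a)^{\nu^*}$. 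What is needed later is not finiteness of $\sup_Q\mathcal L_0$, but this explicit smallness as $a\to m$.

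\textbf{The gap in Step 3.} After normalizing, the limit equation is not $(\pD-a)w=0$. The rescaled Soler coefficient $V_n=2\nu h\lambda_n^{2\nu-2}|\bar\varphi_n\varphi_n|^{\nu-2}\bar\varphi_n\varphi_n$ does not tend to zero. It is only bounded in $L^{\nu^*}$: by the Nehari-type identity, $\|V_n\|_{L^{\nu^*}}^{\nu^*}\le\frac{(2\nu)^{\nu^*}h_{\max}^{\nu^*-1}}{\nu-1}c_{\eps_n}$. So the blow-up limit is a nontrivial null spinor solving $(\pD-a)\varphi=V\gamma^0\varphi$ with $\bar\varphi\varphi=0$. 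Nothing about the sign structure of Step 2 excludes such a spinor; one has to show it needs a large potential.

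The paper does this by proving $\|V\|_{L^{\nu^*}}\ge\kappa_*(m)$ in two steps:
\begin{itemize}
\item A null spinor with $\|\varphi\|_{L^3}=1$ lies at $L^3$-distance at least $\sqrt2-1$ from $K=\ker(\pD-m)$, since $|\varphi-l|^2=|\phi-l_0|^2+|\phi|^2\ge\frac12|l_0|^2$.
\item Uniform invertibility of $\pD-a$ on $K^\perp$ gives $\|Q\varphi\|_{L^3}\le C(m)\|V\|_{L^{\nu^*}}$, with $C(m)$ independent of $a$.
\end{itemize}
Blow-up therefore forces $\liminf c_{\eps_n}\ge c_\infty(m,a)\ge\frac{\nu-1}{(2\nu)^{\nu^*}}h_{\max}^{-1/(\nu-1)}\kappa_*(m)^{\nu^*}$. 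Comparing this with $c_\eps\le U(m,a)$ gives a contradiction exactly when $m-a<\kappa_*(m)\operatorname{vol}(\T^3)^{-1/\nu^*}(h_{\min}/h_{\max})^{1/\nu}$, that is, when $a>a_*$. This comparison is also where $h_{\min}$ (from $U$) and $h_{\max}$ (from $c_\infty$) meet.

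Without this energy-versus-threshold comparison, your Step 3 produces no contradiction. The $\eps$-uniform bound on $\psi_\eps$ is then unproved, and so is the existence of the limit solution.
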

%  The key step is to show that, under the condition $a>a_*$, the perturbed minimax levels $c_\eps(m,a)$ remain uniformly bounded above by a constant $U(m,a)$ which lies strictly below the blow-up threshold $c_\infty(m,a)$:
% \begin{align}
% c_\eps(m,a)\le U(m,a)<c_\infty(m,a),
%  \qquad 0<\eps\le1.
% \end{align}
% This uniform subthreshold estimate, together with a compactness argument as $\eps\to0^+$, allows us to extract a nontrivial solution of the unperturbed equation. The blow-up threshold $c_\infty(m,a)$ is defined in terms of a limiting blow-up problem. We emphasize that our argument establishes this sufficient condition via the uniform lower bound $\kappa(m,a)\ge\kappa_*(m)$; no necessity of the threshold is claimed.

The proof relies on a combination of tools from critical point theory for strongly indefinite functionals. We work in the fractional Sobolev space $H^{1/2}(\T^3,\mathbb{C}^4)$, equipped with the equivalent norm induced by $|\pD|^{1/2}$. The spectrum of $\pD$ is explicitly computed in Section~\ref{sec:prelim}, and its gap structure plays a crucial role in the compactness estimates. We then construct an admissible pseudo-gradient flow, following the framework developed in~\cite{WZ2026Stationary}, which allows us to define a linking geometry and a minimax level $c_\eps(m,a)$ for the perturbed functionals $J_\eps$. The key step is to show that, under the condition $a>a_*$, the perturbed minimax levels $c_\eps(m,a)$ remain uniformly bounded above by a constant $U(m,a)$ that lies strictly below the blow-up threshold $c_\infty(m,a)$. This uniform separation, together with a subsequential compactness argument for the critical points $\psi_\eps$ as $\eps\to0^+$, allows us to extract a nontrivial limit solving the unperturbed equation.

A central technical contribution of this paper is the refined estimate of the constant $\kappa(m,a)$, defined as the infimum of the $L^{\nu^*}$-norm of a potential $V$ for which there exists a nontrivial spinor $\varphi$ with $\bar\varphi\varphi=0$ and $(\pD-a)\varphi=V\gamma^0\varphi$. We prove a uniform positive lower bound $\kappa(m,a)\ge\kappa_*(m)$ independent of $a\in(0,m)$, using a projection argument onto the kernel of $\pD-m$ and the null-cone condition $\bar\varphi\varphi=0$. This estimate is then combined with an explicit upper bound for the linking level, obtained by testing with a one-dimensional family of spinors, to derive the sufficient condition $a>a_*$.

The structure of the paper is as follows. In Section~\ref{sec:prelim}, we recall the spectral properties of the Dirac operators on $\T^3$ and define the function space. Section~\ref{sect:variational str} introduces the variational setting and the perturbed functionals. In Section~\ref{sect:perturbation}, we verify the Palais--Smale condition for the perturbed problem and discuss the admissible pseudo-gradient flow. Section~\ref{sect:example} contains the definition and estimate of $\kappa(m,a)$ and the subthreshold theorem. Section~\ref{sect:last} is devoted to the proof of the main theorem.

Our results complement the recent work~\cite{WZ2026Stationary} on stationary solutions on compact manifolds, and provide a new existence result for periodic solutions on the flat torus $\T^3$ with Soler-type nonlinearities.

\section{Preliminaries}\label{sec:prelim}

\subsection{The Dirac operators}

We will denote
\begin{align}
    \D= \sum_{k=1}^3 -i\gamma^0\gamma^k\p_k, & &
    \mbox{and} & &
    \pD=\sum_{k=1}^3 -i\gamma^0\gamma^k\p_k+m\gamma^0 .
\end{align}
Note that the coefficients of~$\D$ satisfy
\begin{align}
    (-i\gamma^0\gamma^k)(-i\gamma^0\gamma^j)+ (-i\gamma^0\gamma^j)(-i\gamma^0\gamma^k)=-2\delta^{jk} I_4, \qquad \forall j,k\in \braces{1,2,3}.
\end{align}
Thus~$\D$ satisfies the geometers' convention. In matrix form, we have
\begin{align}
    \D =
    \begin{pmatrix}
        0 & -i\mathbf{\sigma}\cdot\nabla \\ -i\mathbf{\sigma}\cdot\nabla & 0
    \end{pmatrix}
    = \begin{pmatrix}
        0 & \pd \\ \pd & 0
    \end{pmatrix}
\end{align}
where
\begin{align}
    \pd\equiv -i\mathbf{\sigma}\cdot \nabla = \sum_{k=1}^3 -i\sigma^k\p_k
\end{align}
denotes the intrinsic Dirac operator on~$\R^3$ and on~$\mathbb{T}^3$ (equipped
with the trivial spin structure) acting on~$\C^2$-valued functions.

\subsection{Spectrum of the Dirac operators}

Throughout the paper, $\T^3=\mathbb R^3/\Gamma$ is a flat torus
associated with a lattice $\Gamma\subset\mathbb R^3$, equipped with the trivial spin structure. The spectrum of the intrinsic Dirac operator~$\pd$ on~$\T^3$ is explicitly known, see e.g.,~\cite{Friedrich1984zur} and~\cite[Chapter 2.1]{Ginoux2009Dirac}. It is well known that
\begin{align}
    \Spect(\pd_{\T^3})=\braces{\pm2\pi |\zeta^*| \; \mid\;  \zeta^*\in \Gamma^*}.
\end{align}
Let
\begin{align}
\Gamma^*
:=
\left\{
k\in\mathbb R^3:
k\cdot\theta\in\mathbb Z
\ \text{for every }\theta\in\Gamma
\right\}
\end{align}
be the dual lattice of $\Gamma$. Thus, for every $k\in\Gamma^*$, the function
\begin{align}
e_k(x):=
\exp(2\pi i\, k\cdot x),
\qquad x\in\mathbb R^3,
\end{align}
is $\Gamma$-periodic, since
\begin{align}
e_k(x+\theta)
=
e^{2\pi i k\cdot(x+\theta)}
=
e^{2\pi i k\cdot\theta}e_k(x)
=
e_k(x)
\end{align}
for every $\theta\in\Gamma$. The functions $\{e_k\}_{k\in\Gamma^*}$ form an
orthogonal basis of $L^2(\T^3)$, and every spinor
$\psi\in L^2(\T^3,\mathbb C^4)$ admits a Fourier expansion
\begin{align}
\psi(x)
=
\sum_{k\in\Gamma^*}\widehat\psi(k)e^{2\pi i k\cdot x},
\qquad
\widehat\psi(k)\in\mathbb C^4.
\end{align}
Furthermore, the complex multiplicity of~$0$ is~$2^{{\lfloor n/2\rfloor}}=2^{{\lfloor 3/2\rfloor}}=2$. In particular, note that with respect to this trivial spin structure,~$\Spect(\pd)$ is symmetric about~$0$.

Moreover, we have the following spectral information.

\begin{lemma}\label{lemma:spectrum D}~\cite[Lemma 2.2]{WZ2026Stationary}
    The spectrum of~$\pD$ is given by
    \begin{align}
        \Spect(\pD)
        =\braces{\pm\sqrt{\mu^2+ m^2} \; \mid \; \mu\in\Spect(\D)}
    \end{align}
    with multiplicities
    \begin{align}
        \Mtp&(\pD; \sqrt{\mu^2+m^2})= \Mtp(\pD; -\sqrt{\mu^2+m^2}) = \Mtp(\D; \mu),\quad \mbox{ for } \mu\neq 0, \\
        \Mtp&(\pD;m)=\Mtp(\pD;-m)=\frac{1}{2}\Mtp(\D;0)=2.
    \end{align}
\end{lemma}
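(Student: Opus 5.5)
The plan is to compute $\pD^2$ and then use the anticommutation between $\D$ and $\gamma^0$ to pair up eigenspaces. First I will verify the algebraic identity
\begin{align}
\pD^2=\D^2+m^2 I_4 .
\end{align}
This holds because $(\gamma^0)^2=I_4$ and $\D\gamma^0+\gamma^0\D=0$. The anticommutation is read off from the block form: $\D$ is off-diagonal and $\gamma^0=\operatorname{diag}(I_2,-I_2)$. Equivalently, $\gamma^0(-i\gamma^0\gamma^k)=-i\gamma^k$ and $(-i\gamma^0\gamma^k)\gamma^0=i\gamma^k$, using $\gamma^0\gamma^k=-\gamma^k\gamma^0$. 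Expanding $(\D+m\gamma^0)^2$, the cross terms cancel. Since $\D^2$ and $\pD$ commute and are self-adjoint with compact resolvent on $\T^3$, I will diagonalize $\pD$ on each eigenspace $E_\lambda$ of $\D^2$ with $\lambda=\mu^2$. On $E_{\mu^2}$ the eigenvalues of $\pD$ are therefore $\pm\sqrt{\mu^2+m^2}$. This gives the inclusion of $\Spect(\pD)$ in the stated set. Note also that $\D^2=\operatorname{diag}(\pd^2,\pd^2)$.

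Next I will count multiplicities. For $\mu\neq0$, the space $E_{\mu^2}$ splits as $\ker(\D-\mu)\oplus\ker(\D+\mu)$. Because $\gamma^0$ anticommutes with $\D$, it maps $\ker(\D-\mu)$ isomorphically onto $\ker(\D+\mu)$, so $\dim E_{\mu^2}=2\Mtp(\D;\mu)$. The same reasoning applies to $\pD$ on $E_{\mu^2}$. Here $\pD$ is self-adjoint, and I will find an operator anticommuting with it there, namely $\Theta:=\D-\tfrac{\mu^2}{m}\gamma^0$ suitably normalized. The cleaner route is a direct computation: $\Theta\pD+\pD\Theta=2\D^2-\tfrac{2\mu^2}{m}\cdot m=0$ on $E_{\mu^2}$, and $\Theta$ is invertible there. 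Hence $\Theta$ swaps the $\pm\sqrt{\mu^2+m^2}$ eigenspaces, so each has dimension $\tfrac12\dim E_{\mu^2}=\Mtp(\D;\mu)$.

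For $\mu=0$, the space $E_0=\ker\D$ consists of the constant spinors in $\C^4$; equivalently $\ker\pd\oplus\ker\pd$, consisting of constants. On it $\pD=m\gamma^0$, whose $\pm m$ eigenspaces each have dimension $2=\tfrac12\Mtp(\D;0)$. Here $\Mtp(\D;0)=4$, as computed by the Fourier expansion: $\D$ acts on $\widehat\psi(k)e_k$ by a matrix whose square is $4\pi^2|k|^2I_4$, so its kernel is exactly $k=0$.

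The only point needing care is the $\mu\neq0$ multiplicity count, where the invertibility of $\Theta$ must be checked. For this I will compute $\Theta^2=\D^2+\tfrac{\mu^4}{m^2}$ on $E_{\mu^2}$. The cross terms cancel again by anticommutation, so $\Theta^2=\mu^2+\mu^4/m^2>0$ there. Alternatively, the whole lemma can be checked mode by mode via the Fourier decomposition. On each $4$-dimensional block, $\pD$ is the matrix
\begin{align}
\begin{pmatrix} m & 2\pi\sigma\cdot k\\ 2\pi\sigma\cdot k & -m\end{pmatrix},
\end{align}
whose eigenvalues $\pm\sqrt{4\pi^2|k|^2+m^2}$ each have multiplicity $2$. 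This matches the known spectrum of $\pd$ recalled above.
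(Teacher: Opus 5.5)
The paper gives no argument for this lemma; it only defers to \cite[Lemma 2.2]{WZ2026Stationary}. Your proposal is therefore a self-contained substitute rather than a variant of an in-text proof, and it is correct.

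The steps you leave implicit also hold:
\begin{itemize}
\item $\Theta$ preserves $E_{\mu^2}$, because $\D$ and $\gamma^0$ both commute with $\D^2$.
\item Every eigenspinor of $\pD$ with eigenvalue $\pm\sqrt{\mu^2+m^2}$ lies in $E_{\mu^2}$, by $\pD^2=\D^2+m^2$. So the dimension count on $E_{\mu^2}$ really computes $\Mtp(\pD;\pm\sqrt{\mu^2+m^2})$.
\item The equal split of $E_{\mu^2}$ also gives the reverse inclusion in the spectral identity.
\end{itemize}

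Your two routes buy different things. The abstract argument uses only $\gamma^0\D+\D\gamma^0=0$ and $(\gamma^0)^2=I_4$, so it is the version that survives on a general compact manifold as in the cited work. The one exception is the count at $\mu=0$, which needs $\ker\D$ to split evenly between the $\pm1$ eigenspaces of $\gamma^0$; here that follows from $\ker\D=\ker\pd\oplus\ker\pd$. The Fourier block computation is specific to the torus, but it is a complete proof by itself. It also exhibits $K=\ker(\pD-m)$ as the constant spinors $\binom{k_0}{0}$, which the paper uses without comment in Sections~\ref{sect:example} and~\ref{sect:last}.

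Finally, you can drop the $\mu$-dependent $\Theta$, and the phrase ``suitably normalized'' is unnecessary. Take $A:=\gamma^0\D$ instead: it satisfies $A\pD+\pD A=0$ identically and $A^2=-\D^2$. It therefore maps the $\sqrt{\mu^2+m^2}$-eigenspace of $\pD$ isomorphically onto the $-\sqrt{\mu^2+m^2}$-eigenspace for every $\mu\neq0$ at once.
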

The details of the proof are contained in~\cite{WZ2026Stationary}, so we omit them here.

\subsection{The working space}

Using the spectral information above, we work on the fractional Sobolev space~$H^{\frac{1}{2}}(\T^3,\C^4)$, whose definition is recalled briefly in the following. In the variational arguments below, $H^{\frac{1}{2}}(\mathbb T^3,\mathbb C^4)$ is regarded as a real Hilbert space, with the real part of the Hermitian pairing as scalar product.

We have seen that~$\pD$ is a self-adjoint elliptic operator and the spectrum consists of nonzero eigenvalues.
Let~$\tau \colon \mathcal{B}(\R)\to \operatorname{Proj}(L^2(\T^3,\C^4))$ be the spectral measure of~$\pD$, where~$\mathcal{B}(\R)$ is the Borel~$\sigma$-algebra of~$\R$.
Then the operator~$\pD$ admits the spectral resolution
\begin{align}
    \pD =\int_\R \lambda \dd\tau(\lambda).
\end{align}

The absolute value of $\pD$, denoted by $|\pD|$, is a non-negative self-adjoint operator defined by
\begin{align}
|\pD| = \int_{\mathbb{R}} |\lambda| \, \dd\tau(\lambda),
\end{align}
with domain $\Dom(|\pD|) := \left\{ \psi \in L^2(\mathbb{T}^3, \mathbb{C}^4) \mid \int_{\mathbb{R}} |\lambda|^2 \, \dd\Abracket{\tau(\lambda)\psi,\psi}_{L^2} < \infty \right\}$.
The operator~$|\pD|^{\frac{1}{2}}$ is similarly defined by
\begin{align}
|\pD|^{1/2} = \int_{\mathbb{R}} |\lambda|^{1/2} \, \dd\tau(\lambda),
\end{align}
with domain
\begin{align}
   \Dom(|\pD|^{1/2}) = \left\{ \psi \in L^2(\mathbb{T}^3, \mathbb{C}^4) \mid \int_{\mathbb{R}} |\lambda| \, \dd\Abracket{\tau(\lambda)\psi,\psi}_{L^2}  < \infty \right\}.
\end{align}

We will work with the space
\begin{align}
    H^{\frac{1}{2}}(\T^3, \mathbb{C}^4)
    \coloneqq \braces{ \psi\in L^2(\T^3;\C^4)\;\mid \; \|\psi\|_{H^{\frac{1}{2}}} \equiv \parenthesis{\|\psi\|_{L^2}^2 + \||\pD|^{\frac{1}{2}}\psi\|_{L^2}^2}^{\frac{1}{2}} <+\infty }.
\end{align}
Note that for each~$\psi\in  H^{\frac12}(\T^3, \mathbb{C}^4)$, we have
  \begin{align}
      \int_{\mathbb{T}^3} ||\pD|^{1/2}\psi|^2 \dv 
      = \langle |\pD|^{1/2}\psi, |\pD|^{1/2}\psi \rangle_{L^2(\T^3, \mathbb{C}^4)}
      = \langle |\pD|\psi, \psi \rangle_{H^{-\frac{1}{2}}\times H^{\frac{1}{2}}} 
      = \int_{\mathbb{R}} |\lambda| \,\dd\Abracket{\tau(\lambda)\psi,\psi}_{L^2} .
  \end{align}
Since the smallest eigenvalue of~$|\pD|$ is~$m>0$, we see that 
\begin{align}
    \|\psi\|_{L^2} \leq \frac{1}{\sqrt{m}}\| |\pD|^{\frac{1}{2}} \psi\|_{L^2}. 
\end{align}
Thus we have an equivalent norm on~$H^{\frac{1}{2}}(\T^3,\C^4)$ given by 
\begin{align}\label{eq:equiv norm}
    \tnorm{\psi}_{H^{1/2}(\T^3,\C^4)}:= \| |\pD|^{\frac{1}{2}} \psi\|_{L^2(\mathbb{T}^3,\C^4)}= \parenthesis{ \langle |\pD|\psi, \psi \rangle_{H^{-\frac{1}{2}}\times H^{\frac{1}{2}}} }^{\frac{1}{2}}.
\end{align}
We remark that for~$\psi\in H^{\frac{1}{2}}(\T^3,\C^4)$, the expression~$\int_{\T^3}\Abracket{\pD\psi,\psi}\dv$ will always be understood in the duality pairing sense.

Using the spectral decomposition of~$\pD$, we define the closed subspaces spanned by eigenspinors of positive [resp. negative] eigenvalues by
\begin{align}
    H^{\frac{1}{2},+}\coloneqq
    \overline{\bigoplus_{\lambda>0} \Eigen(\pD; \lambda)}^{H^{\frac{1}{2}}},
    \quad \mbox{resp.}
    \quad
    H^{\frac{1}{2},-}\coloneqq
    \overline{\bigoplus_{\lambda<0} \Eigen(\pD; \lambda)}^{H^{\frac{1}{2}}}.
\end{align}
Then we have the following orthogonal decomposition
\begin{align}
    H^{\frac{1}{2}}(\mathbb{T}^3, \mathbb{C}^4)
    =H^{\frac{1}{2},+}\oplus H^{\frac{1}{2},-}.
\end{align}
Furthermore, we let
\begin{align}
    P^{\pm} \colon H^{\frac{1}{2}}(\mathbb{T}^3,\C^4) \to H^{\frac{1}{2},\pm}
\end{align}
denote the orthogonal projections onto the subspaces~$H^{\frac{1}{2},\pm}$.
For any~$\psi\in H^{\frac{1}{2}}$, we can decompose it as
\begin{align}
    \psi = P^+\psi + P^-\psi \equiv \psi^+ + \psi^- \in H^{\frac{1}{2},+}\oplus H^{\frac{1}{2},-}.
\end{align}
Note that this is also a~$L^2$ orthogonal projection:
\begin{align}
    \|\psi\|_{L^2}^2 = \|\psi^+\|_{L^2}^2 + \|\psi^-\|_{L^2}^2.
\end{align}

With the above projectors, we have
\begin{align}
    \pD = \pD(P^+ + P^-) = \pD P^+ + \pD P^-, & &\mbox{ and }& &
    |\pD|= \pD P^+ - \pD P^-.
\end{align}

\section{The variational structure}\label{sect:variational str}

The equation~\eqref{eq:NDE-spatial} is the Euler-Lagrange equation of the functional
\begin{align}
    J\colon H^{\frac{1}{2}}(\T^3,\C^4)\to \R
\end{align}
given by
\begin{align}
    J(\psi)
    =\int_{\T^3}  \frac{1}{2}\Abracket{\psi,\pD\psi} -\frac{a}{2}|\psi|^2 - h(x)|\bar{\psi}\psi|^\nu\dv.
\end{align}
In particular, we have
\begin{align}
    \bar{\psi}\psi = \Abracket{\gamma^0\psi,\psi}= |\psi_{\up}|^2 - |\psi_{\low}|^2, & &
    |\psi|^2 = |\psi_{\up}|^2 + |\psi_{\low}|^2 = \sum_{k=1}^4 |\psi^k|^2.
\end{align}
Note that~$\bar{\psi}\psi$ may be small, while~$|\psi|^2$ is large.
This kind of nonlinearity, which frequently arises in various particle models in quantum field theory, is more delicate than nonlinearities of the form~$|\psi|^2$. We seek new periodic solutions by variational methods.

\section{Perturbations}\label{sect:perturbation}

For each~$\eps\in [0,1]$, consider the perturbed functional
\begin{align}
    J_\eps(\psi)
    =& J(\psi)-\eps\int_{\T^3} |\psi|^{2\nu}\dv \\
    =& \int_{\T^3} \frac{1}{2}\Abracket{\psi,\pD\psi}-\frac{a}{2}\Abracket{\psi,\psi}- h(x)|\bar{\psi}\psi|^\nu-\eps|\psi|^{2\nu}\dv.
\end{align}
The Euler-Lagrange equation of~$J_\eps$ is
\begin{align}\label{eq:perturbative Dirac eq}
    \pD\psi -a\psi -2\nu h(x)\bar{\psi}\psi\cdot|\bar{\psi}\psi|^{\nu-2}\gamma^0\psi= 2\nu\eps|\psi|^{2\nu-2}\psi.
\end{align}

Since the nonlinearity ~$F(x,\psi)=h(x)|\bar{\psi}\psi|^\nu$ studied here satisfies conditions (F1)-(F5) from Theorem 1.1 of the reference~\cite{WZ2026Stationary}, the corresponding perturbed functional fulfills both the Palais--Smale condition~\cite{WZ2026Stationary} and the linking geometry~\cite[Lemma 4.1]{WZ2026Stationary}.

In Section 4 of the cited reference~\cite{WZ2026Stationary}, the authors consider the case~$N=1$. In this case, choose 
\begin{align}
0\ne e_H\in K:=\ker(\pD-m),\qquad
\tnorm{e_H}_{H^{1/2}}=1,\qquad
\mathscr E_1:=\operatorname{span}_{\R}\{e_H\}.
\end{align}
Let 
\begin{align}
    \mathscr{C}_1(R)
    \coloneqq \braces{\psi= \psi^- + \lambda e_H\in H^{\frac{1}{2}}(\mathbb{T}^3,\C^4) \mid  \psi^-\in H^{\frac{1}{2},-}, \; \tnorm{\psi^-}_{H^{1/2}}\leq R, \; \lambda \in [0,R]}. 
\end{align}
For~$r\in (0,R)$, consider the sphere in~$H^{\frac{1}{2},+}$ with radius~$r$:
\begin{align}
    \mathscr{S}^+(r)\coloneqq \braces{\psi\in H^{\frac{1}{2},+} \; \mid \; \tnorm{\psi}_{H^{{1}/{2}}}=r }.
\end{align}
Since both sets are infinite-dimensional, the classical finite-dimensional linking argument from~\cite{Ambrosetti2007Nonlinear,Benci1979Critical} does not directly apply. Nevertheless, we will see that they separate the functional~$J_\eps$ in the sense of~\cite{Schechter2021linking}.

\subsection{Verification of the Palais--Smale condition for the perturbed functionals}
Fix~$\eps \in (0,1]$, and let~$(\psi_n)_{n\geq 1}$ be a Palais--Smale sequence for~$J_\eps$ at a level~$c>0$, namely
\begin{align}\label{eq:PS-level}
    J_\eps(\psi_n)=\int_{\T^3} \frac{1}{2}\langle \psi_n, \pD\psi_n\rangle - \frac{a}{2}|\psi_n|^2 - h(x)|\bar{\psi_n}\psi_n|^\nu - \eps|\psi_n|^{2\nu} \dv \to c,
\end{align}
\begin{align}\label{eq:PS:differential}
    \dd J_\eps(\psi_n)= \pD\psi_n - a\psi_n - 2\nu h(x)\bar{\psi_n}\psi_n|\bar{\psi_n}\psi_n|^{\nu-2}\gamma^0\psi_n -2\eps\nu|\psi_n|^{2\nu-2}\psi_n \; \to \; 0
    , \qquad \mbox{ in } H^{-\frac{1}{2}}
\end{align}
as~$n\to +\infty$.

\begin{prop}
    By~\cite[Section 4.1]{WZ2026Stationary}, for each~$\eps\in (0,1]$, the Palais--Smale condition holds for~$J_\eps$. Every Palais--Smale sequence for $J_\eps$ is bounded in $H^{1/2}$, although the bound may depend on $\eps$.
\end{prop}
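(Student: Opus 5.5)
The plan is the standard two-step argument for strongly indefinite functionals: first show every Palais--Smale sequence is bounded, then extract a convergent subsequence using compactness of the Sobolev embedding and invertibility of $\pD-a$.

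\textbf{Step 1: boundedness.} Write $G_\eps(x,\psi)=h(x)|\bar\psi\psi|^\nu+\eps|\psi|^{2\nu}$, which is $2\nu$-homogeneous in $\psi$. By Euler's identity, $\langle \partial_\psi G_\eps,\psi\rangle=2\nu G_\eps$. Testing \eqref{eq:PS:differential} against $\psi_n$ and combining with \eqref{eq:PS-level} gives
\begin{align}
    (\nu-1)\int_{\T^3} h|\bar\psi_n\psi_n|^\nu+\eps|\psi_n|^{2\nu}\dv
    = J_\eps(\psi_n)-\tfrac12\dd J_\eps(\psi_n)[\psi_n]
    \le c+1+o(1)\tnorm{\psi_n}.
\end{align}
Here the perturbation is used for the first time: it yields $\eps\|\psi_n\|_{L^{2\nu}}^{2\nu}\le C(1+\tnorm{\psi_n})$. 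This is exactly where the $\eps$-dependence of the bound enters, since $|\bar\psi\psi|^\nu$ alone does not control $\|\psi\|_{L^{2\nu}}$.

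Next test \eqref{eq:PS:differential} against $\psi_n^+-\psi_n^-$. Since $a<m\le|\lambda|$ on the spectrum, the quadratic part satisfies
\begin{align}
    \int\langle(\pD-a)\psi_n,\psi_n^+-\psi_n^-\rangle\ge(1-a/m)\tnorm{\psi_n}^2 .
\end{align}
The nonlinear part is bounded by $C\int(|\psi_n|^{2\nu-1})|\psi_n^+-\psi_n^-|$. By H\"older's inequality with exponents $2\nu/(2\nu-1)$ and $2\nu$, and the embedding $H^{1/2}\hookrightarrow L^{2\nu}$ (valid since $2\nu<3$), this is at most $C\|\psi_n\|_{L^{2\nu}}^{2\nu-1}\tnorm{\psi_n}$. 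Inserting the previous estimate gives
\begin{align}
    \tnorm{\psi_n}^2\le C_\eps\big(1+\tnorm{\psi_n}\big)^{(2\nu-1)/(2\nu)}\tnorm{\psi_n}+o(1)\tnorm{\psi_n}.
\end{align}
The exponent $(2\nu-1)/(2\nu)+1<2$, so $\tnorm{\psi_n}$ is bounded by a constant depending on $\eps$.

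\textbf{Step 2: compactness.} Pass to a weakly convergent subsequence $\psi_n\rightharpoonup\psi$ in $H^{1/2}$. Since $2\nu<3$, the embedding into $L^{2\nu}$ is compact, so $\psi_n\to\psi$ strongly in $L^{2\nu}$. The Nemytskii map $\psi\mapsto\partial_\psi G_\eps(x,\psi)$ is continuous from $L^{2\nu}$ to $L^{2\nu/(2\nu-1)}\subset H^{-1/2}$, using the growth bound $|\partial_\psi G_\eps|\le C|\psi|^{2\nu-1}$ with $2\nu-1>1$. Hence the nonlinear terms converge in $H^{-1/2}$.

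Because $a\notin\Spect(\pD)$, the operator $\pD-a\colon H^{1/2}\to H^{-1/2}$ is an isomorphism. Therefore
\begin{align}
    \psi_n=(\pD-a)^{-1}\big(\dd J_\eps(\psi_n)+\partial_\psi G_\eps(\psi_n)\big)
\end{align}
converges strongly in $H^{1/2}$. This completes the proof of the Palais--Smale condition.

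The main obstacle is Step 1: a naive Ambrosetti--Rabinowitz argument only controls $\int h|\bar\psi\psi|^\nu$. The $\eps$-term is indispensable there, and it is also why no $\eps$-uniform bound is claimed.
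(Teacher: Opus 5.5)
Your argument is correct. The $2\nu$-homogeneity identity $J_\eps(\psi_n)-\tfrac12\dd J_\eps(\psi_n)[\psi_n]=(\nu-1)\int_{\T^3}h|\bar\psi_n\psi_n|^\nu+\eps|\psi_n|^{2\nu}\dv$ gives the $\eps$-dependent $L^{2\nu}$ control. Testing with $\psi_n^+-\psi_n^-$, where $a<m$ gives the coercivity $(1-a/m)\tnorm{\psi_n}^2$, then yields boundedness because $\frac{2\nu-1}{2\nu}<1$. Finally, the compact embedding $H^{1/2}\Subset L^{2\nu}$ together with the invertibility of $\pD-a\colon H^{1/2}\to H^{-1/2}$ gives strong convergence.

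The paper does not prove this proposition itself but defers to \cite[Section 4.1]{WZ2026Stationary}. Your proof is a self-contained version of the standard argument that this citation relies on. It also rests on the same homogeneity identity the paper later uses for critical points, namely $\int_{\T^3}h|\bar\psi_\eps\psi_\eps|^\nu+\eps|\psi_\eps|^{2\nu}\dv=c_\eps(m,a)/(\nu-1)$.
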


To construct deformations compatible with the strongly indefinite spectral splitting, we introduce the following class of admissible pseudo-gradient fields~\cite{WZ2026Stationary}. In the following part of the section, ~$\eps\in(0,1]$ will be fixed. A vector field~$W$ on~$H^{\frac{1}{2}}(\T^3,\C^4)$ is called an \emph{admissible pseudo-gradient field} if it has the form
\begin{align}
    W(\psi)= \eta(\psi) \parenthesis{ (P^+ -P^- )\psi - \widetilde{K}(\psi)},
\end{align}
where
\begin{itemize}
    \item[(W1)] $\eta\colon H^{\frac{1}{2}}\to [0,1]$ is a locally Lipschitz function,
    \item[(W2)] $\widetilde{K}\colon H^{\frac{1}{2}}\to H^{\frac{1}{2}}$ is locally Lipschitz and is compact on bounded sets;
    \item[(W3)] $W|_{\p\mathscr{C}_1(R)} = 0$;
    \item[(W4)] On the region~$\braces{\psi\colon \eta(\psi)>0}$, there holds~$\dd J_\eps(\psi)[P^+\psi -P^- \psi -\widetilde{K}(\psi)]\geq 0$;
    \item[(W5)] $\tnorm{W(\psi)}_{H^{1/2}} \leq 1$ for any~$\psi\in H^{\frac{1}{2}}$.
\end{itemize}
This notion is slightly different from the classical one as in~\cite{Ambrosetti2007Nonlinear}.
In particular, we require it to be uniformly bounded but not necessarily comparable to the genuine gradient.

Being bounded and locally Lipschitz, the vector field~$-W$ generates a global flow~$\phi^W_t\colon H^{\frac{1}{2}}\to H^{\frac{1}{2}}$:
\begin{align}
    \begin{cases}
        \frac{\p\phi^W_t(\psi)}{\p t} = - W(\phi^W_t(\psi)), & \quad \forall \psi \in H^{\frac{1}{2}}, \; \forall t\geq 0, \\
        \phi^W_0(\psi)=\psi, & \quad \forall \psi\in H^{\frac{1}{2}}.
    \end{cases}
\end{align}
The property (W3) guarantees that~$\p\mathscr{C}_1(R)$ is fixed along the flow, and (W4) implies that the functional~$J_\eps$ does not increase along the flow.
For this reason we call this flow~$(\phi^W_t)$ an admissible pseudo-gradient flow.
For each~$t\geq 0$, we call~$\phi^W_t$ a time-$t$ map of the admissible pseudo-gradient flow.

Note that (W5) implies that
\begin{align}
    \tnorm{\phi^W_t(\psi)-\psi}_{H^{1/2}}\leq \int_0^{|t|} \tnorm{W(\phi^W_s(\psi))}_{H^{1/2}}\dd{s} \leq |t|.
\end{align}
Thus, there is no blow-up in finite time, and bounded sets are flowed into bounded sets in finite time.

Fix a~$\psi\in H^{\frac{1}{2}}$ and consider a single flow line~$\phi^W_t(\psi)$.
Taking the negative projection of the flow equation, we have
\begin{align}
    P^-\parenthesis{\frac{\p \phi^W_t(\psi)}{\p t}}
    = \eta(\phi^W_t(\psi))P^-\phi^W_t(\psi)
    + \eta(\phi^W_t(\psi))P^-\widetilde{K}(\phi^W_t(\psi)).
\end{align}
It follows that
\begin{align}
    P^-\phi^W_t(\psi)
    = e^{u^W(t,\psi)}P^-\psi + C^W(t,\psi)
\end{align}
with
\begin{align}
    u^W(t,\psi)=\int_0^t \eta(\phi^W_s(\psi))\dd s, \qquad  0\leq u^W(t,\psi)\leq t ,
\end{align}
\begin{align}
    C^W(t,\psi)
    = \int_0^t e^{\int_s^t \eta(\phi^W_\tau(\psi))\dd \tau } \eta(\phi^W_s(\psi))P^-\widetilde{K}(\phi^W_s(\psi))\dd{s}.
\end{align}
We see that for any bounded set~$B\subset H^{\frac{1}{2}}$ and any bounded time interval~$[0,T]$, the map
\begin{align}
    C^W\colon [0,T]\times B \to H^{\frac{1}{2},-}
\end{align}
is continuous and compact.

Let~$\Gamma_\eps$ be the collection of finite compositions of time maps of admissible pseudo-gradient flows:
\begin{align}
    \Gamma_\eps\coloneqq \braces{ \phi^{W_k}_{t_k}\circ \cdots\circ \phi^{W_1}_{t_1} \mid W_j \mbox{ is admissible }, t_j\geq 0, \; k\in\mathbb{N} }.
\end{align}
Setting~$t_j=0$, we see that~$\id\in\Gamma_\eps\neq \emptyset$.
Moreover,~$\Gamma_\eps$ is closed under composition (so it is a semigroup), and any element~$\tilde{\phi}$ is homotopic to the identity map, say via~$\Phi_\eps(t)$:~$\Phi_\eps(0)=\id$,~$\Phi_\eps(1)=\tilde{\phi}$, and for~$t\in [0,1]$,
\begin{itemize}
    \item[(1)] $\Phi_\eps(t)$ fixes~$\p\mathscr{C}_1(R)$ pointwise;
    \item[(2)] $P^-\Phi_\eps(t,\psi)=e^{u(t,\psi)}P^-\psi+ C_{\Phi_\eps}(t,\psi)$ where~$u$ is continuous and bounded, and~$C_{\Phi_\eps}$ is continuous and compact.
\end{itemize}

\begin{lemma}\label{lemma:intersection}~\cite[Lemma 4.6]{WZ2026Stationary}
    For any~$\tilde{\phi}\in \Gamma_\eps$ and~$\Phi_\eps(t)$ as above,
    \begin{align}
        \Phi_\eps(t,\mathscr{C}_1(R))\cap \mathscr{S}^+(r)\neq \emptyset,\qquad \forall t\in [0,1].
    \end{align}
\end{lemma}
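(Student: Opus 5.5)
The plan is to prove the intersection property by a Leray--Schauder degree argument in a finite-codimension reduction. Fix $t\in[0,1]$ and write $\Phi=\Phi_\eps(t)$. Parametrize $\mathscr{C}_1(R)$ by $(\psi^-,\lambda)\in B^-_R\times[0,R]$, where $B^-_R$ is the closed $R$-ball in $H^{\frac12,-}$. We look for a point $u=\psi^-+\lambda e_H$ with
\begin{align}
P^-\Phi(u)=0,\qquad \tnorm{P^+\Phi(u)}_{H^{1/2}}=r.
\end{align}
The second condition is replaced by a scalar equation. Since $e_H\in H^{\frac12,+}$, decompose $P^+\Phi(u)$ along $e_H$ and its complement; this suggests the map
\begin{align}
G(\psi^-,\lambda)=\Big(-e^{-u(t,\cdot)}C_\Phi(t,\cdot),\; \tnorm{P^+\Phi(\psi^-+\lambda e_H)}_{H^{1/2}}-r+\lambda\Big),
\end{align}
chosen so that $(\psi^-,\lambda)\mapsto(\psi^-,\lambda)-G(\psi^-,\lambda)$ vanishes exactly at intersection points. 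By property (2) of $\Phi_\eps$, the first component of $\mathrm{id}-G$ reads $\psi^-+e^{-u}C_\Phi=e^{-u}P^-\Phi(u)$, and $e^{-u}>0$. In the second component, $\lambda$ cancels, so its zero set is exactly $\tnorm{P^+\Phi}=r$. The second component of $G$ is real-valued and takes values in a one-dimensional space, hence $G$ is compact on bounded sets, using the compactness of $C_\Phi$. Thus $\mathrm{id}-G$ is a compact perturbation of the identity on the bounded open set $O=\{\tnorm{\psi^-}<R,\ 0<\lambda<R\}\subset H^{\frac12,-}\times\R$.

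Next we run the homotopy $s\mapsto\Phi_\eps(s)$, $s\in[0,t]$, which is admissible by properties (1) and (2). On $\p O$ the points correspond to $\p\mathscr{C}_1(R)$, which $\Phi_\eps(s)$ fixes pointwise by (1). There $\Phi_\eps(s)(u)=u=\psi^-+\lambda e_H$, so $P^-\Phi=\psi^-$ and $\tnorm{P^+\Phi}=\lambda$. A zero on $\p O$ would therefore require $\psi^-=0$ and $\lambda=r$, a point with $\tnorm{\psi^-}=0<R$ and $\lambda=r\in(0,R)$, i.e.\ an interior point. So no zeros occur on $\p O$ throughout the homotopy, and homotopy invariance of the Leray--Schauder degree gives $\deg(\mathrm{id}-G_t,O,0)=\deg(\mathrm{id}-G_0,O,0)$.

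At $s=0$ we have $\Phi=\mathrm{id}$, $u\equiv0$ and $C\equiv0$, so the map is $(\psi^-,\lambda)\mapsto(\psi^-,\lambda-r)$. It has the unique nondegenerate zero $(0,r)$, and the degree is $1$. Hence $\mathrm{id}-G_t$ has a zero in $O$, which yields $\Phi_\eps(t,\mathscr{C}_1(R))\cap\mathscr{S}^+(r)\neq\emptyset$.

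The main obstacle is setting up the compact-perturbation form correctly. The nonlinear factor $e^{u(t,\psi)}$ multiplying $P^-\psi$ must be divided out, which needs $u$ continuous and bounded so that $e^{-u}C_\Phi$ is still compact on bounded sets. In addition, $C_\Phi(s,\cdot)$ must vary continuously and compactly jointly in $(s,\psi)$ for the homotopy to be admissible. Both follow from the structure of finite compositions of admissible flows, checked inductively on the number $k$ of factors. For a composition $\phi^{W_2}\circ\phi^{W_1}$, the exponents add and the compact parts compose as $e^{u_2}C_1+C_2$, which remains continuous and compact on bounded sets because flows map bounded sets to bounded sets in finite time.
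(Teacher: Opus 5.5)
Your argument is correct. It is the standard Benci--Rabinowitz Leray--Schauder degree argument on $H^{\frac12,-}\times\R$, which is what \cite[Lemma 4.6]{WZ2026Stationary} relies on (the paper cites that lemma in place of a proof), and property (2) is used exactly as you do: dividing by $e^{u}$ turns the problem into identity minus compact. One harmless slip: with your $G$ you get $(\mathrm{id}-G_0)(\psi^-,\lambda)=(\psi^-,r-\lambda)$, so the degree is $-1$ rather than $1$, which is still nonzero.
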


The corresponding minimax level is defined by 
\begin{align}
c_\eps(m,a)
:=
\inf_{\Phi_\eps\in\Gamma_\eps}
\sup_{\zeta\in\mathscr C_1(R)}
J_\eps(\Phi_\eps(\zeta)).
\end{align}
We next prove subthreshold compactness by the blow-up analysis.

\section{The pure Soler case and the subthreshold estimate}\label{sect:example}

We now specialize the variational construction to the pure
Soler potential
\begin{align}\label{eq:F explicit}
    F(x,\psi)= h(x)|\bar{\psi}\psi|^{\nu}
\end{align}
where~$h\in C^\infty(\T^3,\R)$ is such that~$0<h_{\min}=\min_{x}h(x)\leq h\leq h_{\max}=\max_{x}h(x)$, and~$\nu\in (1,\frac{3}{2})$.

For~$F$ given in~\eqref{eq:F explicit}, we have
\begin{align}
    \p_\psi F(x,\psi)=2\nu h(x)\bar{\psi}\psi|\bar{\psi}\psi|^{\nu-2}\gamma^0\psi.
\end{align}
\begin{lemma}~\cite[Section 4.3]{WZ2026Stationary}
The number $c_\eps(m,a)$ is a critical value of $J_\eps$. More precisely, there exists $\psi_\eps\ne0$ such that
\begin{align}
    J_\eps (\psi_\eps) = c_\eps(m,a)\in [C_*,\frac{m-a}{2m}R^2], & & 
    \dd J_\eps(\psi_\eps)=0, 
\end{align}
where~$C_*>0$ is independent of $\eps\in(0,1]$.
\end{lemma}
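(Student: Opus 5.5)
The plan has three parts: show that $c_\eps(m,a)$ is a critical value, prove the upper bound, and prove the $\eps$-independent lower bound. Throughout, the Palais--Smale condition (Proposition above) and the intersection property (Lemma~\ref{lemma:intersection}) do the main work.

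\emph{Critical value.} Suppose $c_\eps(m,a)$ is positive and finite but not a critical value. By the Palais--Smale condition there are $\delta>0$ and $\beta>0$ with $\tnorm{\dd J_\eps(\psi)}_{H^{-1/2}}\ge\beta$ on the strip $\{|J_\eps-c_\eps|\le 2\delta\}$. On this strip I will build an admissible pseudo-gradient field in the standard way. Write $\nabla J_\eps(\psi)=(P^+-P^-)\psi-\widetilde K(\psi)$, where $\widetilde K$ is the Riesz representative of $a\psi+\partial_\psi F+2\nu\eps|\psi|^{2\nu-2}\psi$ with respect to the $\tnorm{\cdot}$ inner product. Since $2\nu<3$, the embedding $H^{1/2}\hookrightarrow L^{2\nu}$ is compact, so $\widetilde K$ is compact on bounded sets; it is also locally Lipschitz, because the nonlinearity is $C^1$ with exponent $2\nu-1>1$. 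Take a cutoff $\eta$ supported in the strip, vanishing near $\partial\mathscr C_1(R)$, and normalized so that (W5) holds. Property (W3) is compatible with this cutoff, because on $\partial\mathscr C_1(R)$ the functional satisfies $J_\eps\le 0<c_\eps-2\delta$ by the linking geometry \cite[Lemma 4.1]{WZ2026Stationary}. Running the flow for a finite time lowers $\sup J_\eps$ from $c_\eps+\delta$ to $c_\eps-\delta$; the time is finite because $\tnorm{W}\le 1$ and bounded sets stay bounded. The resulting map lies in $\Gamma_\eps$, which contradicts the definition of $c_\eps$. Hence there is $\psi_\eps$ with $\dd J_\eps(\psi_\eps)=0$ and $J_\eps(\psi_\eps)=c_\eps>0$, and in particular $\psi_\eps\ne0$.

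\emph{Upper bound.} Take $\Phi_\eps=\id$. For $\zeta=\psi^-+\lambda e_H\in\mathscr C_1(R)$, the orthogonality $\langle \psi^-,e_H\rangle=0$ together with $\pD e_H=me_H$ gives
$$J_\eps(\zeta)\le \tfrac12\lambda^2\Bigl(1-\tfrac am\Bigr)-\tfrac12\tnorm{\psi^-}^2 -\tfrac a2\|\psi^-\|_{L^2}^2\le \frac{m-a}{2m}R^2.$$
Here I used $\tnorm{e_H}=1$, so that $\|e_H\|_{L^2}^2=1/m$. I also used that the potential terms are nonnegative, so dropping them only increases the right-hand side.

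\emph{Lower bound.} This is the main point, since $C_*$ must not depend on $\eps$. By Lemma~\ref{lemma:intersection}, every $\Phi_\eps(\mathscr C_1(R))$ meets $\mathscr S^+(r)$. It therefore suffices to show $\inf_{\mathscr S^+(r)}J_\eps\ge C_*$. For $\psi\in H^{1/2,+}$ we have $\|\psi\|_{L^2}^2\le\tnorm{\psi}^2/m$. Using $|\bar\psi\psi|^\nu\le|\psi|^{2\nu}$ and the Sobolev embedding $\|\psi\|_{L^{2\nu}}\le S\tnorm\psi$, we get
$$J_\eps(\psi)\ge \frac{m-a}{2m}r^2-(h_{\max}+1)S^{2\nu}r^{2\nu}\qquad(\eps\le1).$$
Since $2\nu>2$, choosing $r$ small (and independent of $\eps$) makes this at least $C_*:=\frac{m-a}{4m}r^2>0$. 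The main care is to fix $r$ and $R$ once for all $\eps\in(0,1]$ so that the linking geometry holds uniformly in $\eps$. This works because $\eps|\psi|^{2\nu}\ge0$ only lowers $J_\eps$ on $\partial\mathscr C_1(R)$ relative to $J_0$, so the boundary estimate $J_\eps\le 0$ transfers from $J_0$ to every $J_\eps$.
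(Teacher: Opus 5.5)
Your upper bound is correct: take the identity map and use $\tnorm{e_H}=1$, $\|e_H\|_{L^2}^2=1/m$ and the orthogonality of $\psi^-$ and $e_H$. Your $\eps$-uniform lower bound is also correct: combine $\inf_{\mathscr S^+(r)}J_\eps\ge\frac{m-a}{4m}r^2$ with Lemma~\ref{lemma:intersection}. Your remark that $J_\eps\le J\le0$ on $\partial\mathscr C_1(R)$ with $R$ independent of $\eps$ is right as well. The paper gives no proof of this lemma and only cites \cite[Section 4.3]{WZ2026Stationary}, so your deformation step has to stand on its own.

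There is a genuine, though repairable, gap in that step. You take $\widetilde K$ to be the exact Riesz representative of $a\psi+\partial_\psi F+2\nu\eps|\psi|^{2\nu-2}\psi$. You claim it is locally Lipschitz because ``the nonlinearity is $C^1$ with exponent $2\nu-1>1$''. That holds for $|\psi|^{2\nu-2}\psi$. It fails for the Soler term $\partial_\psi F=2\nu h\operatorname{sgn}(\bar\psi\psi)|\bar\psi\psi|^{\nu-1}\gamma^0\psi$. The scalar map $s\mapsto\operatorname{sgn}(s)|s|^{\nu-1}$ is only $(\nu-1)$-H\"older at $s=0$, and $\bar\psi\psi$ crosses zero transversally on the null cone $\{\bar\psi\psi=0,\ \psi\neq0\}$.

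Concretely, take the constant null spinor $\psi_0=(1,0,1,0)^T$ and $\psi_t=\psi_0+t\gamma^0\psi_0$. Then $\bar\psi_t\psi_t=4t$. So $\partial_\psi F(\psi_t)-\partial_\psi F(\psi_0)$ is $h(x)$ times a constant spinor of size of order $t^{\nu-1}$, and its $H^{-1/2}$-norm is of order $t^{\nu-1}$. Meanwhile $\tnorm{\psi_t-\psi_0}$ is of order $t$. Hence your field violates (W2), and the flow of $-W$ need not be unique or even well defined as a map. You therefore cannot conclude that the deformed map lies in $\Gamma_\eps$, which is exactly what the contradiction with the definition of $c_\eps$ requires.

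The fix is the standard pseudo-gradient construction; this is why (W2) asks only for \emph{some} locally Lipschitz compact $\widetilde K$, not the exact compact part $K$ of $\nabla J_\eps$.
\begin{itemize}
\item On the open set where $\nabla J_\eps\neq0$, which contains the closed strip, choose around each point $\psi_j$ a ball of radius at most $1$ on which $\tnorm{K(\phi)-K(\psi_j)}\le\frac13\tnorm{\nabla J_\eps(\phi)}$.
\item Take a locally Lipschitz partition of unity $\{\rho_j\}$ subordinate to these balls and set $\widetilde K=\sum_j\rho_jK(\psi_j)$. Glue with, say, $0$ away from the support of your cutoff.
\item Then $\widetilde K$ is locally Lipschitz. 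For bounded $B$, $\widetilde K(B)$ lies in the closed convex hull of $K(B_1)\cup\{0\}$, where $B_1$ is the $1$-neighbourhood of $B$. This hull is compact by Mazur's theorem.
\end{itemize}
Moreover,
\[
\dd J_\eps(\phi)\big[(P^+-P^-)\phi-\widetilde K(\phi)\big]\ge\tfrac23\tnorm{\nabla J_\eps(\phi)}^2
\]
on the strip. This gives (W4) together with a uniform decay rate. Now take $\eta(\phi)=\chi(J_\eps(\phi))\min\{1,1/\tnorm{(P^+-P^-)\phi-\widetilde K(\phi)}\}$, with $\chi$ a Lipschitz cutoff around $c_\eps$. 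This $\eta$ is locally Lipschitz and gives (W5). With it, the rest of your deformation argument goes through as written.
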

Thus,
\begin{align}
    J_\eps(\psi_\eps)
=\frac12\int_{\T^3}
\langle\psi_\eps,(\pD-a)\psi_\eps\rangle\dv
-\left(\int_{\T^3} h(x)|\bar\psi_\eps\psi_\eps|^{\nu}
+\eps|\psi_\eps|^{2\nu}
\dv\right)= c_\eps(m,a),
\end{align}
\begin{align}
    \partial_\psi J_\eps(\psi_\eps)[\psi_\eps]
=\int_{\T^3}
\langle\psi_\eps,(\pD-a)\psi_\eps\rangle\dv
-2\nu\left(\int_{\T^3} h(x)|\bar\psi_\eps\psi_\eps|^{\nu}
+\eps|\psi_\eps|^{2\nu}
\dv\right)=0.
\end{align}

Let
\begin{align}
K:=\ker(\pD-m).
\end{align}
Recall the normalized eigenspinor $e_H\in K$ fixed above.
\begin{align}
    e_H\in K,\qquad \tnorm{e_H}_{H^{\frac{1}{2}}}=1.
\end{align}
Let~$e=\sqrt{m}e_H$. Then
\begin{align}
\|e\|_{L^2}=1,\qquad
\{\lambda e_H:\lambda\ge0\}
=
\{t e:t\ge0\},
\qquad
\pD e=me,
\qquad
\gamma^0e=e.
\end{align}
In the chosen representation, we take
\begin{align}
e=\binom{e_0}{0},\qquad e_0={\operatorname{vol}(\mathbb T^3)}^{-1/2}\xi,\qquad |\xi|=1,
\end{align}
where $e_0\in\mathbb C^2$, so that $\|e\|_{L^2}=1$.

Let $\psi^-\in H^{1/2,-}$ belong to the negative spectral subspace, so that
$\psi^-\perp e$ in $L^2$. Since $\pD$ is self-adjoint and $\pD e=me$, we have
\begin{align}
\int_{\T^3}
\langle (\pD-a)\psi^-,e\rangle\dv=0.
\end{align}
Moreover, since $\gamma^0e=e$,
\begin{align}
\int_{\T^3}
\langle \gamma^0\psi^-,e\rangle\dv
=
\int_{\T^3}
\langle \psi^-,\gamma^0e\rangle\dv
=
\langle\psi^-,e\rangle_{L^2}
=0.
\end{align}
Therefore all mixed terms in the following $L^2$-integrals vanish. In particular,
\begin{align}
\int_{\T^3}
\langle \psi^-+te,\gamma^0(\psi^-+te)\rangle\dv
&=
t^2+
\int_{\T^3}
\langle\gamma^0\psi^-,\psi^-\rangle\dv.
\end{align}
Likewise,
\begin{align}
\int_{\T^3}
\langle \psi^-+te,(\pD-a)(\psi^-+te)\rangle\dv
=
\int_{\T^3}
\langle\psi^-,(\pD-a)\psi^-\rangle\dv
+(m-a)t^2.
\end{align}

Let~$\psi=\psi^-+te$, $r:=\|\psi^-\|_{L^2}$. Then
    \begin{align}
        \int_{\T^3}\frac{1}{2}\Abracket{\psi, (\pD-a)\psi}\dv
        =&\int_{\T^3} \frac{1}{2}\Abracket{\psi^- + te, (\pD-a)(\psi^-+ te)}\dv \\
        \leq & \frac{m-a}{2}t^2-\frac{m+a}{2}r^2,
    \end{align}
    and
    \begin{align}
        \int_{\T^3}\bar{\psi}\psi\dv
        =\int_{\T^3}\overline{(\psi^- +te)}(\psi^- +te)\dv
        =t^2+\int_{\T^3} \Abracket{\gamma^0\psi^-,\psi^-}\dv
        \geq t^2-r^2.
    \end{align}
    If $r\geq t$, then
    \begin{align}
    J(\psi^-+ te)
    \leq&\frac{m-a}{2} t^2 -\frac{m+a}{2} r^2 - \int_{\T^3} F(x,\psi)\dv \\
    \leq & \frac{m-a}{2} t^2 -\frac{m+a}{2} r^2 \leq -a t^2\leq 0.
    \end{align}

For every admissible pair ~$(\psi^-,t)$ satisfying $r:=\|\psi^-\|_{L^2}<t$, we define
\begin{align}
x:=\frac{r^{2}}{t^{2}}\in[0,1),\qquad r=t\sqrt{x}.
\end{align}
By H\"older inequality, 
\begin{align}
    \int_{\T^3} |\bar{\psi}\psi|^\nu\dv
       \geq  {\operatorname{vol}(\mathbb{T}^3)}^{1-\nu}(t^2-r^2)^{\nu}.
    \end{align}
The preceding estimates yield    
\begin{align}
J(\psi^-+te)\le g_x(t),
\end{align}
where
\begin{align}
g_x(t)
=
\frac{m-a-(m+a)x}{2}\,t^2
-
h_{\min}\operatorname{vol}(\mathbb T^3)^{1-\nu}
(1-x)^\nu t^{2\nu}.
\end{align}
When
\begin{align}
m-a-(m+a)x>0\Leftrightarrow 0\leq x<\frac{m-a}{m+a},
\end{align}
maximizing over $t\ge0$ yields
\begin{align}
\max_{t\ge0}g_x(t)
&=
\frac{\nu-1}{\nu^{\nu^*}}
\frac{
\left(\frac{m-a-(m+a)x}{2}\right)^{\frac{\nu}{\nu-1}}
}{
\left(
h_{\min}\operatorname{vol}(\mathbb T^3)^{1-\nu}
(1-x)^\nu
\right)^{\frac1{\nu-1}}
}\\
&=
\frac{\nu-1}{\nu^{\nu^*}}
\frac{\operatorname{vol}(\mathbb T^3)}
{h_{\min}^{\frac1{\nu-1}}\,2^{\nu^*}}
\left(
\frac{m-a-(m+a)x}{1-x}
\right)^{\frac{\nu}{\nu-1}},\qquad \nu^*=\frac{\nu}{\nu-1}.
\end{align}

Therefore,
\begin{align}
\sup_{\substack{\psi^-\in H^{1/2,-},\,t\ge0\\
\|\psi^-\|_{L^2}<t}}
J(\psi^-+te)
\le
\sup_{0\le x<1}\max_{t\ge0}g_x(t)=\sup_{0\le x<\frac{m-a}{m+a}}
\max_{t\ge0}g_x(t).
\end{align}
For $x\geq\frac{m-a}{m+a}$ we have $\frac{m-a-(m+a)x}{2}\leq0$ and hence
$J(\psi^-+te)\leq0$.

Define
\begin{align}
\ell(x):=\frac{m-a-(m+a)x}{1-x},
\qquad
0\leq x<\frac{m-a}{m+a}.
\end{align}
Then
\begin{align}
\ell'(x)=-\frac{2a}{(1-x)^2}<0.
\end{align}
Therefore $\ell$ is strictly decreasing and its maximum is attained at $x=0$. It follows that
\begin{align}
\sup_{\substack{\psi^-\in H^{1/2,-},\,t\ge0\\
\|\psi^-\|_{L^2}<t}}
J(\psi^-+te)
\leq
\frac{\nu-1}{\nu^{\nu^*}}
\frac{\operatorname{vol}(\mathbb T^3)}
{h_{\min}^{\frac1{\nu-1}}\,2^{\nu^*}}
(m-a)^{\frac{\nu}{\nu-1}}=U(m,a).
\end{align}

We first obtain from the identity deformation that
\begin{align}
c_\eps
\leq
\sup_{\zeta\in\mathscr C_1(R)}
J_\eps(\zeta).
\end{align}
After the normalization of the generator $e$, namely replacing the $H^{1/2}$-normalized vector by the $L^2$-normalized one along the same ray, the linking cone satisfies
\begin{align}
\mathscr C_1(R)
\subset
\left\{
\psi^-+te:
\psi^-\in H^{1/2,-},\ t\geq0
\right\}.
\end{align}

Therefore,
\begin{align}
c_\eps
\leq
\sup_{\substack{
\psi^-\in H^{1/2,-},t\geq0
}}
J_\eps(\psi^-+te).
\end{align}
We now show that the supremum can be restricted to the region
$\|\psi^-\|_{L^2}<t$. Indeed, for~$\psi=\psi^-+te$ with $\|\psi^-\|_{L^2}\geq t$, we have, by the spectral decomposition of $\pD$,
\begin{align}
\frac12\int_{\T^3}
\langle
(\pD-a)\psi,\psi
\rangle\dv
\leq0 .
\end{align}
Since the nonlinear terms are non-positive in the functional $J_\eps$, it follows that
\begin{align}
J_\eps(\psi)
\leq
J(\psi)
\leq0 .
\end{align}
On the other hand, 
\begin{align}
J_\eps(0)=0.
\end{align}
Hence the contribution from the region~$\|\psi^-\|_{L^2}\geq t$ cannot increase the supremum. Consequently,
\begin{align}
c_\eps
\leq
\sup_{\substack{\psi^-\in H^{1/2,-},\,t\ge0\\
\|\psi^-\|_{L^2}<t}}
J_\eps(\psi^-+te).
\end{align}
We obtain
\begin{align}\label{eq:uniform estimate for perturbation}
c_\eps(m,a)
\leq
\sup_{\substack{\psi^-\in H^{1/2,-},\,t\ge0\\
\|\psi^-\|_{L^2}<t}}
J_\eps(\psi^-+te)
&\leq
\sup_{\substack{\psi^-\in H^{1/2,-},\,t\ge0\\
\|\psi^-\|_{L^2}<t}}
J(\psi^-+te)\\
&\leq U(m,a)=
\frac{\nu-1}{\nu^{\nu^*}}
\frac{\operatorname{vol}(\mathbb T^3)}
{h_{\min}^{\frac1{\nu-1}}\,2^{\nu^*}}
(m-a)^{\frac{\nu}{\nu-1}},
\qquad\text{for every }\eps\in(0,1].
\end{align}

We see that
\begin{align}
    \int_{\mathbb{T}^3} h(x)|\bar{\psi}_\eps\psi_\eps|^{\nu}+ \eps|\psi_{\eps}|^{2\nu}\dv=\frac{c_\eps(m,a)}{\nu-1}.
\end{align}
In particular, we have
\begin{align}
\eps
\int_{\T^3}
|\psi_{\eps}|^{2\nu}
\dv
\le
\frac{U(m,a)}{\nu-1},
\end{align}
and
\begin{align}
\int_{\T^3}
h(x)|\overline{\psi_{\eps}}\psi_{\eps}|^\nu
\dv
\le
\frac{U(m,a)}{\nu-1}.
\end{align}

Let~$\nu^*$ be the H\"{o}lder conjugate of~$\nu$, namely~$\frac{1}{\nu}+\frac{1}{\nu^*}=1$, so~$\nu^*=\frac{\nu}{\nu-1}$.
For later convenience we let~$\nu'$ be such that
\begin{align}
    \frac{1}{3} +\frac{1}{\nu^*}=\frac{1}{\nu'}
\end{align}
Since~$\nu\in (1,\frac{3}{2})$, we have~$\nu^*>3$ and~$\nu'=\frac{3\nu}{4\nu-3}\in (\frac{3}{2},3)$.

Fix $m>0$ and $a\in(0,m)$. Define the threshold constant:
\begin{align}\label{the threshold constant}
\kappa(m,a)\coloneqq \inf\left\{
\|V\|_{L^{\nu^*}}: V\in L^{\nu^*}(\mathbb T^3,\mathbb R),
\exists 0\neq\varphi\in W^{1,\nu'}(\T^3,\mathbb C^4), \quad \mbox{s.t.}
(\pD-a)\varphi=V\gamma^0\varphi, \quad  \bar\varphi\varphi=0,
\right\}.
\end{align}
As usual, $\inf\varnothing:=+\infty$.

\begin{lemma}
 For every $\eps_0\in(0,1]$, there exists a constant $C_{\eps_0}>0$ such that
\begin{align}
 \sup_{\eps\in[\eps_0,1]}
 \|\psi_\eps\|_{H^{1/2}}
\le C_{\eps_0}.
\end{align}
\end{lemma}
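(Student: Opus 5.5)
The plan is to combine the energy identity already derived for the critical points $\psi_\eps$ with one further test of the Euler--Lagrange equation, namely against $\psi_\eps^+-\psi_\eps^-$. Because $\eps\ge\eps_0>0$, the perturbation term gives an $L^{2\nu}$ bound on $\psi_\eps$, and this in turn will control the $H^{1/2}$-norm.

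\textbf{Step 1: a uniform $L^{2\nu}$ bound.} From the identity $\int_{\T^3} h|\bar\psi_\eps\psi_\eps|^\nu+\eps|\psi_\eps|^{2\nu}\dv=\frac{c_\eps(m,a)}{\nu-1}$ and the uniform upper bound \eqref{eq:uniform estimate for perturbation}, we already have $\eps\int|\psi_\eps|^{2\nu}\le \frac{U(m,a)}{\nu-1}$. Hence for all $\eps\in[\eps_0,1]$,
\begin{align}
\|\psi_\eps\|_{L^{2\nu}}^{2\nu}\le \frac{U(m,a)}{\eps_0(\nu-1)} =: M_{\eps_0}.
\end{align}

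\textbf{Step 2: test against $\psi_\eps^+-\psi_\eps^-$.} Set $\psi=\psi_\eps$ and use $\dd J_\eps(\psi)[\psi^+-\psi^-]=0$. The quadratic part equals $\tnorm{\psi^+}^2+\tnorm{\psi^-}^2-a\|\psi^+\|_{L^2}^2+a\|\psi^-\|_{L^2}^2$. By $\|\cdot\|_{L^2}^2\le \frac1m\tnorm{\cdot}^2$ this is at least $(1-\frac am)\tnorm{\psi}^2_{H^{1/2}}$. For the nonlinear part, use the pointwise bounds $|\bar\psi\psi|\le|\psi|^2$ and $|\gamma^0\psi|=|\psi|$. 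They give
\begin{align}
|\p_\psi F(x,\psi)|+2\nu\eps|\psi|^{2\nu-1}\le 2\nu(h_{\max}+1)|\psi|^{2\nu-1}.
\end{align}
Then Hölder with exponents $\frac{2\nu}{2\nu-1}$ and $2\nu$ bounds the nonlinear contribution by $2\nu(h_{\max}+1)\|\psi\|_{L^{2\nu}}^{2\nu-1}\|\psi^+-\psi^-\|_{L^{2\nu}}$.

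\textbf{Step 3: Sobolev embedding and conclusion.} Since $2\nu<3$ and $H^{1/2}(\T^3)\hookrightarrow L^3\hookrightarrow L^{2\nu}$, there is a constant $S$ (depending on the torus) with $\|\varphi\|_{L^{2\nu}}\le S\tnorm{\varphi}_{H^{1/2}}$. Also $\tnorm{\psi^+-\psi^-}=\tnorm{\psi}$ by orthogonality of the spectral splitting. Combining Steps 2 and 3 gives
\begin{align}
\Bigl(1-\frac am\Bigr)\tnorm{\psi_\eps}^2\le 2\nu(h_{\max}+1)S\, M_{\eps_0}^{\frac{2\nu-1}{2\nu}}\tnorm{\psi_\eps},
\end{align}
so $\tnorm{\psi_\eps}\le \frac{2\nu m(h_{\max}+1)S}{m-a}M_{\eps_0}^{(2\nu-1)/(2\nu)}$. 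By the norm equivalence \eqref{eq:equiv norm}, this yields the claimed $C_{\eps_0}$.

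I expect no serious obstacle. The only point needing care is the Hölder--Sobolev step, where the test function $\psi^+-\psi^-$ must be measured in $L^{2\nu}$, not in $H^{1/2}$ directly; this is exactly where subcriticality $2\nu<3$ enters. The dependence on $\eps_0$ is unavoidable by this method: the $L^{2\nu}$ control comes only from the perturbation, since $|\bar\psi\psi|^\nu$ does not control $|\psi|^{2\nu}$.
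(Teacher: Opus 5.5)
Your argument is correct, but after Step 1 it takes a different route from the paper. The paper does not test the equation. From the same $L^{2\nu}$ bound and the pointwise estimate $C|\psi_\eps|^{2\nu-1}$ on the nonlinear terms, it gets $\|(\pD-a)\psi_\eps\|_{L^{2\nu/(2\nu-1)}}\le C_{\eps_0}$. It then uses the invertibility of $\pD-a$ for $a\in(0,m)$ and $L^p$ elliptic estimates to obtain $\|\psi_\eps\|_{W^{1,2\nu/(2\nu-1)}}\le C_{\eps_0}$. It concludes with the embedding $W^{1,2\nu/(2\nu-1)}\hookrightarrow H^{1/2}$, which holds because $2\nu/(2\nu-1)>3/2$.

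You instead use the standard device for strongly indefinite functionals. You test $\dd J_\eps(\psi_\eps)=0$ against $\psi_\eps^+-\psi_\eps^-$, which turns the indefinite quadratic form into a quantity bounded below by $(1-a/m)\tnorm{\psi_\eps}^2$. You then close with H\"older and $H^{1/2}\hookrightarrow L^{2\nu}$.

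Your route is more elementary: it needs only the spectral splitting and the fractional Sobolev embedding, not $L^p$ regularity theory for the Dirac operator. It also gives an explicit constant with transparent dependence on $m-a$, $h_{\max}$ and $\eps_0$. The paper's route gives a stronger conclusion, a uniform $W^{1,2\nu/(2\nu-1)}$ bound. This is the same elliptic mechanism the paper reuses in the blow-up and bootstrap arguments of Theorem \ref{thm:Soler-subthreshold}.

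In both proofs the hypothesis $a<m$ is what does the work. For you it appears as the coercivity factor $1-a/m$ on $H^{1/2,+}$; for the paper it appears as the invertibility of $\pD-a$.
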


\begin{proof}

We prove the assertion as follows. For a critical point, we have
\begin{align}
\int_{\T^3}\langle(\pD-a)\psi_\eps,\psi_\eps\rangle\dv
=
2\nu \int_{\T^3}h(x)|\bar{\psi_\eps}\psi_\eps|^{\nu}\dv+2\nu\eps \int_{\T^3}|\psi_\eps|^{2\nu}\dv,
\end{align}
and
\begin{align}
\int_{\T^3}h(x)|\bar{\psi_\eps}\psi_\eps|^{\nu}\dv+\eps \int_{\T^3}|\psi_\eps|^{2\nu}\dv
=
\frac{c_\eps(m,a)}{\nu-1}\le
\frac{U(m,a)}{\nu-1}.
\end{align}
Since $\eps\ge\eps_0$, we get
\begin{align}
\eps_0\|\psi_\eps\|_{L^{2\nu}}^{2\nu}
\le
\eps \int_{\T^3}|\psi_\eps|^{2\nu}\dv
\le
\frac{c_\eps(m,a)}{\nu-1}\le
\frac{U(m,a)}{\nu-1}.
\end{align}
The minimax levels are uniformly bounded on $[\eps_0,1]$, so
\begin{align}
\|\psi_\eps\|_{L^{2\nu}}\le C_{\eps_0}.
\end{align}

Since~$2\nu\in (2,3)$, we have~$\frac{2\nu}{2\nu-1}>\frac{3}{2}$. From the uniform $L^{2\nu}$-bound,
\begin{align}
\left|
h|\bar\psi_\eps\psi_\eps|^{\nu-2}
(\bar\psi_\eps\psi_\eps)\psi_\eps
\right|
+
\eps|\psi_\eps|^{2\nu-1}
\le C|\psi_\eps|^{2\nu-1},
\end{align}
so
\begin{align}
\|(\pD-a)\psi_\eps\|_{L^{\frac{2\nu}{2\nu-1}}}\le C_{\eps_0}.
\end{align}
Since $a\in(0,m)$, $\pD-a$ is invertible, and elliptic estimates give
\begin{align}
\|\psi_\eps\|_{W^{1,\frac{2\nu}{2\nu-1}}}\le C_{\eps_0}.
\end{align}
Because $\frac{2\nu}{2\nu-1}>3/2$,
$W^{1,\frac{2\nu}{2\nu-1}}\hookrightarrow H^{1/2}$,
so
\begin{align}
\sup_{\eps\in[\eps_0,1]}
\|\psi_\eps\|_{H^{1/2}}<\infty.
\end{align}

Thus, if the uniform $L^3$-bound fails, then the blow-up sequence must satisfy $\eps_n\to0$, because on every $[\eps_0,1]$, the $H^{1/2}$-bound implies an $L^3$-bound.

\end{proof}

\begin{thm}
\label{thm:Soler-subthreshold}
Assume that
\begin{align}
U(m,a)<c_\infty(m,a)=\frac{\nu-1}{(2\nu)^{\nu^*}}\cdot\frac{1}{h_{\max}^{\frac{1}{\nu-1}}}\kappa(m,a)^{\nu^*}.
\end{align}
Let $\psi_\eps$ be a critical point of $J_\eps$ at the
minimax level $c_\eps(m,a)$. Then
\begin{align}
\sup_{\eps\in(0,1]}
\|\psi_\eps\|_{H^1(\T^3)}<+\infty.
\end{align}
\end{thm}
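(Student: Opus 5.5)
We argue by contradiction through a blow-up analysis. By the preceding lemma, a uniform bound on every interval $[\eps_0,1]$ already holds, so if the claim fails there is a sequence $\eps_n\to0^+$ with $\|\psi_{\eps_n}\|_{H^1}\to\infty$. The first step is to reduce the $H^1$ question to an $L^{2\nu}$ (equivalently $L^3$-type) question. The Euler--Lagrange equation \eqref{eq:perturbative Dirac eq} gives $(\pD-a)\psi_\eps=O(|\psi_\eps|^{2\nu-1})$, and $\pD-a$ is invertible. A bootstrap, using elliptic $W^{1,p}$ estimates together with Sobolev embeddings on $\T^3$ and the subcriticality $2\nu<3$, should then show that a uniform bound on $\|\psi_\eps\|_{L^{2\nu}}$ yields a uniform $W^{1,p}$ bound for some $p\ge 2$. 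In particular it yields the $H^1$ bound. So we may assume $M_n:=\|\psi_{\eps_n}\|_{L^{2\nu}}\to\infty$, while the identity $\int h|\bar\psi_\eps\psi_\eps|^\nu+\eps|\psi_\eps|^{2\nu}=c_\eps/(\nu-1)\le U(m,a)/(\nu-1)$ stays bounded.

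The second step is to normalize: set $\varphi_n:=\psi_{\eps_n}/M_n$, so that $\|\varphi_n\|_{L^{2\nu}}=1$. Writing $V_n:=2\nu h|\bar\psi_n\psi_n|^{\nu-2}\bar\psi_n\psi_n$ (with $\psi_n=\psi_{\eps_n}$), the normalized functions satisfy
\begin{align}
(\pD-a)\varphi_n=V_n\gamma^0\varphi_n+2\nu\eps_n|\psi_n|^{2\nu-2}\varphi_n .
\end{align}
The potential has $|V_n|^{\nu^*}=(2\nu)^{\nu^*}h^{\nu^*}|\bar\psi_n\psi_n|^{\nu}$. Hence
\begin{align}
\|V_n\|_{L^{\nu^*}}^{\nu^*}\le (2\nu)^{\nu^*}h_{\max}^{\nu^*-1}\int h|\bar\psi_n\psi_n|^\nu \le (2\nu)^{\nu^*}h_{\max}^{\frac1{\nu-1}}\frac{c_{\eps_n}}{\nu-1}.
\end{align}
The perturbation term is controlled in $L^{\nu^*}$ by $(\eps_n^{\nu^*-1}\cdot\eps_n\int|\psi_n|^{2\nu})^{1/\nu^*}\to0$, since $\nu^*>1$. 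Also $\int|\bar\varphi_n\varphi_n|^\nu=M_n^{-2\nu}\int|\bar\psi_n\psi_n|^\nu\to0$. Using H\"older with $\frac13+\frac1{\nu^*}=\frac1{\nu'}$ and elliptic estimates, we aim to bound $\varphi_n$ in $W^{1,\nu'}$. This needs $\varphi_n$ bounded in $L^3$, which is where the interpolation/bootstrap must be done carefully; one can normalize in $L^3$ instead if that is more convenient. We then pass to a subsequence with $\varphi_n\to\varphi$ strongly in $L^{2\nu}$ (compact embedding, since $\nu'^*=3\nu/(3-2\nu)>2\nu$) and $V_n\rightharpoonup V$ weakly in $L^{\nu^*}$.

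The third step identifies the limit. We get $\|\varphi\|_{L^{2\nu}}=1$, so $\varphi\ne0$, and $\bar\varphi\varphi=0$ from the vanishing of $\int|\bar\varphi_n\varphi_n|^\nu$. The limit equation is $(\pD-a)\varphi=V\gamma^0\varphi$: the product $V_n\gamma^0\varphi_n$ converges weakly because $\varphi_n\to\varphi$ strongly in $L^{2\nu}$ and $\frac1{\nu^*}+\frac1{2\nu}<1$. Weak lower semicontinuity then gives
\begin{align}
\kappa(m,a)^{\nu^*}\le\|V\|^{\nu^*}_{L^{\nu^*}}\le (2\nu)^{\nu^*}h_{\max}^{\frac1{\nu-1}}\frac{\limsup c_{\eps_n}}{\nu-1}\le\frac{(2\nu)^{\nu^*}h_{\max}^{\frac1{\nu-1}}}{\nu-1}U(m,a).
\end{align}
This is precisely $c_\infty(m,a)\le U(m,a)$, contradicting the hypothesis.

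The main obstacle is the compactness step: obtaining uniform bounds on $\varphi_n$ strong enough to give a $W^{1,\nu'}$ limit and strong convergence. The equation only controls $V_n\gamma^0\varphi_n$ through the $L^{\nu^*}$ bound on $V_n$, and that requires an a priori $L^3$ bound on $\varphi_n$. I would first try normalizing by $\|\psi_n\|_{L^3}$, using the elliptic estimate $\|\varphi_n\|_{W^{1,\nu'}}\le C(\|V_n\|_{L^{\nu^*}}\|\varphi_n\|_{L^3}+o(1))$. I would then argue that a nonzero limit exists, since otherwise strong $L^3$ convergence to $0$ contradicts the normalization. Some care is also needed to check that weak limits of $V_n$ remain real-valued and that the null condition passes to the limit.
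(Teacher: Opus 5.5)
Your proposal is correct and follows essentially the paper's argument. The steps match: contradiction along $\eps_n\to0^+$, normalization by $\|\psi_{\eps_n}\|_{L^3}$ (the route you commit to in your last paragraph), the bound $\|V_n\|_{L^{\nu^*}}^{\nu^*}\le (2\nu)^{\nu^*}h_{\max}^{\nu^*-1}c_{\eps_n}/(\nu-1)$, elliptic compactness, the null condition and limit equation giving an admissible pair for $\kappa(m,a)$, and a bootstrap to $H^1$. You only do the bootstrap reduction first and treat the $\eps$-term as a potential vanishing in $L^{\nu^*}$.

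One small correction: the Sobolev exponent of $W^{1,\nu'}$ is exactly $\nu^*$, not $3\nu/(3-2\nu)$. Also, your $L^{2\nu}$ normalization would work directly, since $\frac1{\nu^*}+\frac1{2\nu}=\frac{2\nu-1}{2\nu}$ gives a $W^{1,\frac{2\nu}{2\nu-1}}$ bound, which embeds compactly into $L^{2\nu}$ and $L^3$ for $\nu<\frac32$. This is the exponent the paper uses.
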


\begin{proof}
We first prove that there exists $C=C(\mathbb{T}^3,m,a,\nu)>0$, independent of $\eps\in(0,1]$, such that
\begin{align}
\|\psi_\eps\|_{L^3(\T^3)}\leq C.
\end{align}
Suppose for contradiction that the assertion fails. Then there exists a sequence $\eps_n\to0^+$ with
\begin{align}
\lambda_n :=
\|\psi_{\eps_n}\|_{L^3}
\to+\infty.
\end{align}
Set
\begin{align}
\varphi_n=\frac{\psi_{\eps_n}}{\lambda_n},
\end{align}
so that
\begin{align}
\|\varphi_n\|_{L^3}=1.
\end{align}

The normalized equations
\begin{align}\label{eq:normalized eqn}
    \pD\varphi_n-a\varphi_n=2\nu h(x)\lambda_n^{2\nu-2}|\bar{\varphi_n}\varphi_n|^{\nu-2}(\bar{\varphi_n}\varphi_n)\gamma^0\varphi_n + 2\nu\eps_n |\psi_{\eps_n}|^{2\nu -2 }\varphi_n, \qquad \mbox{ on } \; \T^3.
\end{align}
Define
\begin{align}
V_n
:=
2\nu h(x)\lambda_n^{2\nu-2}
\left|\bar{\varphi_n}\varphi_n\right|^{\nu-2}\bar{\varphi_n}\varphi_n.
\end{align}
Then
\begin{align}
(\pD-a)\varphi_n
=
V_n
\gamma^0\varphi_n
+
R_n,
\end{align}
where
\begin{align}
R_n
:=
2\nu\eps_n
|\psi_{\eps_n}|^{2\nu-2}\varphi_n.
\end{align}
We have
\begin{align}
   \|R_n\|_{L^{\frac{2\nu}{2\nu -1}}}=\|2\nu\eps_n |\psi_{\eps_n}|^{2\nu -2 }\varphi_n \|_{L^{\frac{2\nu}{2\nu -1}}}
    = & \frac{2\nu \eps_n^{\frac{1}{2\nu}} }{\lambda_n}
     \parenthesis{ \int_{\T^3} \left|\eps_n^{\frac{2\nu-1}{2\nu}} |\psi_{\eps_n}|^{2\nu -1} \right|^{\frac{2\nu}{2\nu-1}} \dv   }^{\frac{2\nu-1}{2\nu}}\\
    \leq& \frac{2\nu \eps_n^{\frac{1}{2\nu}} }{\lambda_n}
    \parenthesis{ \int_{\T^3} \eps_n |\psi_{\eps_n}|^{2\nu}\dv}^{\frac{2\nu-1}{2\nu}} \\
    \leq& \frac{2\nu \eps_n^{\frac{1}{2\nu}} }{\lambda_n}\parenthesis{ \frac{U(m,a)}{\nu-1} }^{\frac{2\nu-1}{2\nu}}
\end{align}
thanks to~\eqref{eq:uniform estimate for perturbation}.
 Thus, $R_n\to0
\quad\text{in }L^{\frac{2\nu}{2\nu-1}}$.

\begin{align}
\|V_n\|^{\nu^*}_{L^{\nu^*}}&=(2\nu)^{\nu^*}\lambda_n^{(2\nu-2)\nu^*}\int_{\T^3}h(x)^{\nu^*}|\bar{\varphi_n}\varphi_n|^{(\nu-1)\nu^*}\dv\\
&=(2\nu)^{\nu^*}\lambda_n^{2\nu}\int_{\T^3}h(x)^{\nu^*}|\bar{\varphi_n}\varphi_n|^{\nu}\dv\\
&\leq (2\nu)^{\nu^*}h_{\max}^{\nu^*-1}\int_{\T^3}h(x)|\bar{\psi_{\eps_n}}\psi_{\eps_n}|^{\nu}\dv\\
&\leq \frac{(2\nu)^{\nu^*}h_{\max}^{\nu^*-1}}{\nu-1}\cdot U(m,a).
\end{align}
Thus ${V_n}$ is bounded in $L^{\nu^*}$. By weak compactness, passing to a subsequence,
\begin{align}
V_n\rightharpoonup V
\quad\text{weakly in }L^{\nu^*}.
\end{align}

By H\"{o}lder's inequality, using $\frac1{\nu'}=\frac1{\nu^*}+\frac13$, $\|V_n\|_{\nu^*}\le C$ and $\|\varphi_n\|_{L^3}=1$, since $ \nu'>\frac{2\nu}{2\nu-1}>\frac{3}{2}$ and $\T^3$ has finite volume, we obtain \begin{align}
\|V_n\varphi_n\|_{L^{ \nu'}}\leq C \Rightarrow\|V_n\varphi_n\|_{L^{\frac{2\nu}{2\nu-1}}}\leq C.
\end{align}
By elliptic regularity,
\begin{align}
\|\varphi_n\|_{W^{1,\frac{2\nu}{2\nu-1}}}
&\le
C\left(
\|V_n\varphi_n\|_{L^{\frac{2\nu}{2\nu-1}}}
+\|R_n\|_{L^{\frac{2\nu}{2\nu-1}}}
\right)\\
&\le C.\qquad \frac{2\nu}{2\nu-1}>\frac{3}{2}.
\end{align}
Therefore, after extracting a further subsequence,
\begin{align}
\varphi_n\rightharpoonup\varphi
\quad\text{in }W^{1,\frac{2\nu}{2\nu-1}}.
\end{align}
Since $1<\nu<\frac{3}{2}$, the embedding~$W^{1,\frac{2\nu}{2\nu-1}}(\T^3)\Subset L^{3}(\T^3)$ is compact. Therefore,
\begin{align}
\varphi_n\to\varphi
\quad\text{strongly in }L^3.
\end{align}
We know $\|\varphi\|_{L^3}=1$, so
\begin{align}
\varphi\ne0.
\end{align}
We have 
\begin{align}
\bar\varphi_n\varphi_n=
\lambda_n^{-2}
\bar\psi_{\eps_n}\psi_{\eps_n}.
\end{align}
We now prove the null condition. We have
\begin{align}\label{the null condition}
\int_{\T^3}|\bar{\varphi_n}\varphi_n|^{\nu}\dv&=\frac{\int_{\T^3}|\bar{\psi_{\eps_n}}\psi_{\eps_n}|^{\nu}\dv}{\lambda_n^{2\nu}}\\
&\leq \frac{C}{h_{\min}\lambda_n^{2\nu}}\to0.
\end{align}
Since $2\nu<3$ and $\T^3$ has finite volume, strong convergence in $L^3$ implies strong convergence in $L^{2\nu}$. Therefore,
\begin{align}
\bar\varphi_n\varphi_n\to\bar\varphi\varphi
\quad\text{in }L^\nu.
\end{align}
Combined with \eqref{the null condition}, we know
\begin{align}
\bar\varphi\varphi=0
\quad\text{a.e. on }\T^3.
\end{align}
For any smooth test spinor $\eta$, we have:
\begin{align}
\int_{\T^3}
\langle(\pD-a)\varphi_n,\eta\rangle\dv=
\int_{\T^3}
\langle V_n\gamma^0\varphi_n,\eta\rangle\dv
+
\int_{\T^3}\langle R_n,\eta\rangle\dv.
\end{align}
The left-hand side converges to
\begin{align}
\int_{\T^3}\langle(\pD-a)\varphi,\eta\rangle\dv.
\end{align}
Because $\varphi_n\to\varphi$ strongly in $L^3$ and $V_n\rightharpoonup V$ in $L^{\nu^*}$,
\begin{align}
\int_{\T^3}
\langle V_n\gamma^0\varphi_n-V\gamma^0\varphi,\eta\rangle\dv&=\int_{\T^3}
\langle V_n\gamma^0(\varphi_n-\varphi),\eta\rangle\dv+\int_{\T^3}
\langle (V_n-V)\gamma^0\varphi,\eta\rangle\dv.
\end{align}
The first term tends to zero because
\begin{align}
\|V_n\|_{L^{\nu^*}}\le C,\qquad
\|\varphi_n-\varphi\|_{L^3}\to0.
\end{align}
For the second term,
$\langle\gamma^0\varphi,\eta\rangle\in L^\nu$,
since $L^3\subset L^\nu$ on the finite-volume torus, and therefore weak convergence
$V_n\rightharpoonup V\quad\text{in }L^{\nu^*}$
makes it tend to zero. Thus,
\begin{align}
V_n\gamma^0\varphi_n
\rightharpoonup
V\gamma^0\varphi\quad \text{in the distributional sense}.
\end{align}
Hence
\begin{align}
(\pD-a)\varphi=V\gamma^0\varphi
\end{align}
holds in the weak sense. We can recover the regularity required in the definition of
\begin{align}
\kappa:
V\in L^{\nu^*},\qquad \varphi\in L^3
\quad\Longrightarrow\quad
V\varphi\in L^{\nu'},
\end{align}
and elliptic regularity gives
\begin{align}
\varphi\in W^{1,\nu'}.
\end{align}
Thus, the pair $(\varphi,V)$ is admissible in the definition of $\kappa(m,a)$. By the definition of $\kappa(m,a)$ and weak lower semicontinuity of the norm,
\begin{align}
\kappa(m,a)^{\nu^*}
&\leq
\|V\|_{L^{\nu^*}}^{\nu^*}\\
&\leq
\liminf_{n\to\infty}
\|V_n\|_{L^{\nu^*}}^{\nu^*}\\
&\leq
(2\nu)^{\nu^*}
h_{\max}^{\nu^*-1}
\liminf_{n\to\infty}
\int_{\T^3}
h(x)|\bar\psi_{\eps_n}\psi_{\eps_n}|^\nu\dv\\
&\leq \frac{(2\nu)^{\nu^*}
h_{\max}^{\nu^*-1}}{\nu-1}\liminf_{n\to\infty}c_{\eps_n}.
\end{align}
It follows that
\begin{align}
\liminf_{n\to\infty}c_{\eps_n}(m,a)
&\geq
\frac{\nu-1}{(2\nu)^{\nu^*}}
\frac{1}{h_{\max}^{\frac{1}{\nu-1}}}
\kappa(m,a)^{\nu^*}\\
&=:c_\infty(m,a).
\end{align}

We have
\begin{align}
J_\eps(\psi)=J_0(\psi)-\eps\|\psi\|_{L^{2\nu}}^{2\nu}
\le J_0(\psi).
\end{align}
By the upper-bound estimate established above,
$\sup_{\mathscr C_1(R)}J_0\le U(m,a)$; moreover $J_\eps\le J_0$. For each $\eps>0$, the identity map belongs to the admissible class $\Gamma_\eps$, hence
\begin{align}
c_\eps(m,a)
\le
\sup_{\mathscr C_1(R)}J_\eps
\le U(m,a).
\end{align}
By the hypothesis of the theorem,
\begin{align}
U(m,a)<c_\infty(m,a),
\end{align}
we obtain
\begin{align}
c_\infty(m,a)
\le
\liminf_{n\to\infty}c_{\eps_n}(m,a)
\le
\limsup_{n\to\infty}c_{\eps_n}(m,a)
\le U(m,a)
<c_\infty(m,a),
\end{align}
a contradiction.

Therefore
\begin{align}
\sup_{\eps\in(0,1]}
\|\psi_\eps\|_{L^3}<+\infty.
\end{align}

By the uniform $L^3$-estimate, both of the nonlinear terms in the equation
\begin{align}
(\pD-a)\psi_\eps
=
2\nu h(x)|\overline{\psi_\eps}\psi_\eps|^{\nu-2}
(\overline{\psi_\eps}\psi_\eps)\gamma^0\psi_\eps
+
2\nu\eps|\psi_\eps|^{2\nu-2}\psi_\eps,
\end{align}
are bounded pointwise by $C|\psi_\eps|^{2\nu-1}$. Hence
\begin{align}
\|(\pD-a)\psi_\eps\|_{L^{\frac{3}{2\nu-1}}}
\le C.
\end{align}
Since $1<\nu<3/2$, we have
\begin{align}
\frac32<\frac{3}{2\nu-1}<3.
\end{align}
Elliptic regularity and Sobolev embedding therefore yield
\begin{align}
\|\psi_\eps\|_{W^{1,\frac{3}{2\nu-1}}}\le C,
\qquad
\|\psi_\eps\|_{L^{\frac{3}{2\nu-2}}}\le C.
\end{align}

More generally, suppose that $\psi_\eps$ is bounded in $L^r$ for
some $r\ge3$. Then
\begin{align}
(\pD-a)\psi_\eps
\end{align}
is bounded in $L^{r/(2\nu-1)}$. As long as
$r/(2\nu-1)<3$, elliptic regularity followed by Sobolev embedding gives a
bound in $L^{r_{\mathrm{new}}}$, where
\begin{align}
\frac1{r_{\mathrm{new}}}
=
\frac{2\nu-1}{r}-\frac13.
\end{align}
Since $\nu<3/2$, this iteration strictly increases the integrability
exponent. Consequently, after finitely many steps, we reach an exponent
$r$ satisfying
\begin{align}
r\ge 2(2\nu-1).
\end{align}
At this stage,
\begin{align}
\frac{r}{2\nu-1}\ge2.
\end{align}
Since $\T^3$ has finite volume, $L^\frac{r}{2\nu-1}(\T^3)\hookrightarrow L^2(\T^3)$, hence
\begin{align}
\|(\pD-a)\psi_\eps\|_{L^2}\le C.
\end{align}
If at some stage $r/(2\nu-1)\ge3$, then $r\ge3(2\nu-1)>2(2\nu-1)$, and the iteration stops. The $L^2$-boundedness of $\psi_\eps$ and the elliptic estimate for
$\pD-a$ then imply
\begin{align}
\|\psi_\eps\|_{H^1}\le C,
\end{align}
with $C$ independent of $\eps\in(0,1]$.

\end{proof}

\section{Proof of the main theorem}\label{sect:last}

\begin{proof}[Proof of Theorem~\ref{thm:main}]

We perform the orthogonal decomposition $L^2(\T^3;\mathbb{C}^4) = K \oplus K^\perp$, where $K^\perp$ stands for the orthogonal complement of $K$. For any spinor $w\in K^\perp$, the spectrum of $\pD$ restricted to $K^\perp$ satisfies
\begin{align}
\operatorname{Spec}\big(\pD\big|_{K^\perp}\big) \subset (-\infty,-m] \cup \big[\sqrt{m^2+\mu_1^2},+\infty\big),
\end{align}
with $\mu_1>0$ the minimal positive eigenvalue of the geometric Dirac operator $\D$. The distance from $a$ to $\operatorname{Spec}(\pD|_{K^\perp})$ is uniformly positive for $a\in[0,m]$.

We introduce a fixed $L^2$-orthonormal basis $\{e_j\}_{j=1}^N$ for $K=\ker(\pD-m)$. For $1<p<\infty$, we define
\begin{align}
L^p_\perp
=
\left\{
f\in L^p(\T^3,\C^4):
\int_{\T^3}\langle f,e_j\rangle \dv=0,
\ j=1,\dots,N
\right\}.
\end{align}
The projections $P$ and $Q=I-P$ extend continuously to $L^p(\T^3,\C^4)$, and
\begin{align}
Q:L^p(\T^3,\C^4)\rightarrow L^p_\perp
\end{align}
is bounded. Decompose $\varphi = P\varphi+ Q\varphi$.

Consequently, the operator
\begin{align}
\pD - a: W^{1, \nu'}\cap K^\perp \to L^{ \nu'}_\perp,\qquad  \nu'=\frac{3\nu^*}{3+\nu^*},
\end{align}
is boundedly invertible. The details can be found in~\cite[Lemma 5.5]{WZ2026Stationary}. There exists a constant $C(m,\mathbb{T}^3,\nu)$ independent of $a$, such that
\begin{align}\label{eq:uniform-resolvent}
\|w\|_{L^3} \le C(m,\mathbb{T}^3,\nu) \big\|(\pD - a)w\big\|_{L^{ \nu'}},\quad \forall \quad w\in W^{1, \nu'}\cap K^\perp.
\end{align}

Since $K$ is finite-dimensional and consists of smooth spinors, the projection $P$ is given by
\begin{align}
Pf=\sum_{j=1}^N \left(\int_{\T^3}
\langle f,e_j\rangle\dv\right)e_j.
\end{align}
Hence, for any $1<p<\infty$, H\"{o}lder's inequality yields
\begin{align}
\|Pf\|_{L^p}\le C_p\|f\|_{L^p}.
\end{align}
Consequently,
\begin{align}
\|Qf\|_{L^p}\le C_p\|f\|_{L^p}.
\end{align}
In particular,
\begin{align}
\|Qf\|_{L^{\nu'}}\le C\|f\|_{L^{\nu'}}.
\end{align}

Let $(\varphi,V)$ be admissible in the definition of $\kappa(m,a)$, and
normalize $\|\varphi\|_{L^3}=1$. Applying $Q$ gives
\begin{align}
(\pD-a)Q\varphi
=Q(V\gamma^0\varphi)
\end{align}
and applying the elliptic estimate for $\pD-a$ on $K^\perp$, we obtain
\begin{align}
\|Q\varphi\|_{L^3}
&\le C\|Q(V\gamma^0\varphi)\|_{L^{\nu'}} \\
&\le
C(m,\mathbb{T}^3,\nu)\|V\gamma^0\varphi\|_{L^{ \nu'}}\\
&\le
C(m,\mathbb{T}^3,\nu)\|V\|_{L^{\nu^*}}\|\varphi\|_{L^3}\\
&=C(m,\mathbb{T}^3,\nu)\|V\|_{L^{\nu^*}}.
\end{align}

Suppose a spinor $\varphi\in W^{1, \nu'}(\T^3;\mathbb{C}^4)$ satisfies $\bar{\varphi}\varphi=0$ almost everywhere and $\varphi\not\equiv 0$. Since every element of $K$ is a constant spinor of the form $\binom{k_0}{0}$, we may write
$P\varphi=\binom{k_0}{0}$ and split $\varphi = \begin{pmatrix}\phi\\\chi\end{pmatrix}$. The null-cone condition reads
\begin{align}
\bar{\varphi}\varphi = |\phi|^2 - |\chi|^2 = 0 \implies |\phi| = |\chi| \quad \text{a.e.}
\end{align}
For arbitrary constant spinor $l=\begin{pmatrix}
        l_0 \\ 0
    \end{pmatrix}\in K$, we obtain
\begin{align}
|\varphi - l|^2 = |\phi - l_0|^2 + |\chi|^2 = |\phi - l_0|^2 + |\phi|^2 \ge \tfrac12 |l_0|^2.
\end{align}
Thus,
\begin{align}
\|\varphi - l\|_{L^3} \ge \tfrac{1}{\sqrt{2}} \|l\|_{L^3}.
\end{align}
Combining with the triangle inequality $\|\varphi - l\|_{L^3} \ge \big|\|\varphi\|_{L^3} - \|l\|_{L^3}\big|$ and setting~$s = \|l\|_{L^3}$, we obtain
\begin{align}
\operatorname{dist}_{L^3}(\varphi,K)
\ge
\min_{s\ge0}
\max\left\{
\frac{s}{\sqrt2},|1-s|
\right\}
=\sqrt2-1.
\end{align}
Since $Q\varphi = \varphi - P\varphi$, we have
\begin{align}
\sqrt2-1
\le
\operatorname{dist}_{L^3}(\varphi,K)
\le
\|\varphi-P\varphi\|_{L^3}
=\|Q\varphi\|_{L^3}.
\end{align}
Consequently, we obtain
\begin{align}
\kappa(m,a)
\geq
\frac{\sqrt2-1}{C(m,\nu,\mathbb{T}^3)}
=:\kappa_*(m)>0
\end{align}
uniformly for $a\in(0,m)$.

We obtain
\begin{align}
c_\infty(m,a)
&=\frac{\nu-1}{h_{\max}^{\frac{1}{\nu-1}}}\cdot\left(\frac{1}{2\nu}\right)^{\nu^*}\kappa(m,a)^{\nu^*}\\
&\geq
\frac{\nu-1}{(2\nu)^{\nu^*}}\cdot\frac{1}{h_{\max}^{\frac{1}{\nu-1}}}\cdot\kappa_*(m)^{\nu^*}.
\end{align}
Choose~$a$ sufficiently close to $m$ so that
\begin{align}
U(m,a)&=\frac{\nu-1}{\nu^{\nu^*}}\cdot
\frac{\operatorname{vol}(\mathbb{T}^3)}{{h_{\min}^{\frac{1}{\nu-1}}}2^{\nu^*}}
\cdot(m-a)^{\frac{\nu}{\nu-1}}\\
&<\frac{1}{h_{\max}^{\frac{1}{\nu-1}}}\cdot\frac{\nu-1}{(2\nu)^{\nu^*}}\cdot\kappa_*(m)^{\nu^*}\leq c_\infty(m,a)\\
&\Rightarrow m-a< \frac{\kappa_*(m)}{\operatorname{vol}(\mathbb{T}^3)^{1/{\nu^*}}}\cdot\left(\frac{h_{\min}}{h_{\max}}\right)^{\frac{1}{\nu}}.
\end{align}
We may therefore define
\begin{align}
a_*
:=\max\left\{
0,
m-\frac{\kappa_*(m)}{\operatorname{vol}(\mathbb{T}^3)^{1/{\nu^*}}}\cdot\left(\frac{h_{\min}}{h_{\max}}\right)^{\frac{1}{\nu}}
\right\}.
\end{align}
Then for any
\begin{align}
a\in(a_*,m),
\end{align}
we have
\begin{align}
U(m,a)<c_\infty(m,a).
\end{align}

By Theorem \ref{thm:Soler-subthreshold}, we have $\sup_{0<\eps\le1}
\|\psi_\eps\|_{H^1}\le C$. Take any sequence $\eps_n\to0^+$. By weak compactness, there exists a subsequence and $\psi_0\in H^1$ such that
\begin{align}
\psi_{\eps_n}\rightharpoonup\psi_0
\quad\text{weakly in }H^1.
\end{align}
The Rellich compact embedding yields that, up to a further subsequence if necessary,
\begin{align}\label{strongly}
\psi_{\eps_n}\to\psi_0 
\quad\text{strongly in }H^{1/2}, 
\quad\text{and strongly in }L^q,
\quad 1\le q<6.
\end{align}
In particular,
\begin{align}
\psi_{\eps_n}\to\psi_0
\quad\text{strongly in }L^{2\nu}
\end{align}
and
\begin{align}
\bar\psi_{\eps_n}\psi_{\eps_n}
\to
\bar\psi_0\psi_0
\quad\text{strongly in }L^\nu.
\end{align}

From $\sup_{0<\eps\le1}
\|\psi_\eps\|_{H^1}\le C$ and Sobolev embedding,
\begin{align}
\|\psi_{\eps_n}\|_{L^{2(2\nu-1)}}\le C,
\end{align}
since $2(2\nu-1)<6$ holds for $1<\nu<2$. Then
\begin{align}
\left\|
\eps_n|\psi_{\eps_n}|^{2\nu-2}
\psi_{\eps_n}
\right\|_{L^2}
&=
\eps_n
\|\psi_{\eps_n}\|_{L^{2(2\nu-1)}}^{2\nu-1}
\le
C\eps_n
\to0.
\end{align}
Thus the perturbation term tends to zero in $L^2$, hence also in $H^{-1/2}$.

We define
\begin{align}
\mathcal N(z):=|\bar z z|^{\nu-2}(\bar z z)\gamma^0z.
\end{align}
From $|\bar z z|\le C|z|^2$, we have
\begin{align}
|\mathcal N(z)|\le C|z|^{2\nu-1}.
\end{align}
By \eqref{strongly} and standard continuity of Nemytskii maps,
\begin{align}
\mathcal N(\psi_{\eps_n})
\to
\mathcal N(\psi_0)\quad\text{strongly in}\quad L^{\frac{2\nu}{2\nu-1}}.
\end{align}
Passing to the limit,
\begin{align}
(\pD-a)\psi_0=2\nu h(x)
\mathcal N(\psi_0)
\end{align}
holds in $H^{-1/2}$. In other words, $\psi_0$ is a weak solution of the original unperturbed equation.

By the uniform lower bound established above,
\begin{align}
\liminf_{n\to\infty}
J_{\eps_n}(\psi_{\eps_n})
\ge C_*>0.
\end{align}
We claim that
\begin{align}
J(\psi_0)
=\lim_{n\to\infty}
J_{\eps_n}(\psi_{\eps_n}).
\end{align}
Indeed, by strong convergence in $H^{1/2}$ and continuity of the quadratic form
\begin{align}
\left\langle
(\pD-a)\psi_{\eps_n},\psi_{\eps_n}
\right\rangle_{H^{-1/2}\times H^{1/2}}
\rightarrow
\left\langle
(\pD-a)\psi_0,\psi_0
\right\rangle_{H^{-1/2}\times H^{1/2}}.
\end{align}
Moreover, as~$2\nu<3$,
\begin{align}
\int_{\T^3}
h(x)
|\overline{\psi_{\eps_n}}
\psi_{\eps_n}|^\nu
\dv
\rightarrow
\int_{\T^3}
h(x)
|\overline{\psi_0}\psi_0|^\nu
\dv.
\end{align}
Finally, since $\eps_n\rightarrow0$ and the sequence
${\psi_{\eps_n}}$ is bounded in $H^{1/2}$, hence bounded in
$L^{2\nu}$, we obtain
\begin{align}
\eps_n
\int_{\T^3}
|\psi_{\eps_n}|^{2\nu}\dv
\rightarrow0 .
\end{align}
Combining the above three convergences gives
\begin{align}
J(\psi_0)
=\lim_{n\to\infty}
J_{\eps_n}(\psi_{\eps_n})\geq C_*>0.
\end{align}
In particular,~$\psi_0\neq 0$.

For the regularity of~$\psi_0$, since $\psi_0\in H^1$, we have $\psi_0\in L^6$. Therefore
\begin{align}
|\mathcal N(\psi_0)|
\le
C|\psi_0|^{2\nu-1}
\in
L^{\frac6{2\nu-1}}.
\end{align}
For $1<\nu<3/2$,
\begin{align}
\frac6{2\nu-1}>3.
\end{align}
Elliptic regularity gives
\begin{align}
\psi_0\in
W^{1,\frac6{2\nu-1}}.
\end{align}
Because the exponent is strictly larger than 3, the Morrey embedding yields~$\psi_0\in C^{0,\alpha}$ for some~$\alpha\in(0,1)$. 
The real-valued map $s\mapsto s|s|^{\nu-2}$ is H\"older continuous for $\nu>1$.
Thus~$2\nu h(x)\mathcal{N}(\psi_0)\in C^{0,\sigma}$
for some~$\sigma\in (0,1)$, and the Schauder theory then implies~$\psi_0\in C^{1,\sigma}(\T^3)$.
\end{proof}

\textbf{Conflict of Interest.} The author has no conflict of interest.

\textbf{Data Availability Statement.} Data sharing is not applicable because no datasets were generated or analyzed in this study.

\bibliographystyle{plainurl}
\bibliography{nonlinearDiracEquation}

\end{document}